\numberwithin{equation}{section}
\newtheorem{theorem}{Theorem}
\newtheorem{lemma}[theorem]{Lemma}
\newtheorem{definition}[theorem]{Definition}
\DeclareMathOperator{\bigO}{O}
\title{\bf{A Combinatorial Insight to the Riemann Boundary Value Problem in Lattice Walks}}
\author{Ruijie Xu\thanks{\href{mailto:xuruijie@bimsa.cn}{xuruijie@bimsa.cn}}}
\affil{\small Beijing Institute of Mathematical Science and Applications (BIMSA)}
\date{ } % Activate to display a given date or no date (if empty),
\begin{document}

\maketitle
\begin{abstract}
The enumeration of quarter-plane lattice walks with small steps is a classical problem in combinatorics. An effective approach is the kernel method, where the solution is derived by positive term extraction. Alternatively, one may reduce the lattice walk problem to a Carleman-type Riemann boundary value problem (RBVP) and solve it via analytic method. In the RBVP framework, two parameters govern the solution: the index $\chi$ the conformal gluing function $w(x)$. 

In this paper, we propose a combinatorial insight into the RBVP approach. We show that the index corresponds to the canonical factorization in the kernel method. The conformal gluing function can be viewed as a mapping that enables the application of positive term extraction. The combinatorial insight of RBVP establishes a unifying link between the kernel method, the RBVP approach and the Tutte's invariants method.
\end{abstract}
\tableofcontents
\section{Introduction}
Quarter-plane lattice walk models describe walks with fixed step sets (allowed steps in terms of lengths and directions), ending on the grids and restricted in the first quadrant. By solving a lattice walk problem, we mean finding an explicit expression of the number of configurations of $n$-step walks ending on an arbitrary vertex. This is equivalent to determining the generating function of the number of configurations. Lattice walk enumeration is a classical combinatorial problem with connections to probability theory \cite{malyshev1972analytical}, mathematical physics \cite{brak2005directed}, integrable systems \cite{Tong2021}, and representation theory \cite{postnova2021counting}. 

Quarter-plane lattice walks with small steps have been extensively studied. In \cite{bousquet2010walks}, Bousquet-M\'elou and Mishna classified all $256$ possible walks into $79$ distinct non-trivial two-dimensional models. Among these, $23$ models are associated with finite symmetry groups: $16$ models with $D_2$ groups (dihedral $2$), five models with $D_3$ groups and two with $D_4$ groups. 

Various methods exist for solving lattice walk problems. In \cite{bousquet2010walks} and \cite{bousquet2016elementary}, the kernel method was introduced to solve all models with finite symmetry groups. In \cite{Melczer2014}, the iterated kernel method was used to derive asymptotic formulas and explicit generating functions for certain singular models.

Prior to combinatorial studies, probabilists developed analytical approaches for quarter-plane random walks. In \cite{malyshev1972analytical}, random walks were reduced to a Riemann boundary value problem (RBVP) with Carleman shift \cite{Litvinchuk2000}. This framework was later adapted for lattice walk enumeration in \cite{raschel2012counting}.

Other works on quarter-plane walks \cite{bernardi2007bijective,fayolle2017random,bousquet2005walks,kauers2008quasi,mishna2009classifying,raschel2020counting,dreyfus2018nature,kurkova2012functions} employ diverse methods from combinatorics, probability theory, algebraic geometry, complex analysis, and computer algebra.

\section{Motivation}\label{motivation}
\subsection{From combinatorics to Riemann boundary value problems}
Various approaches yield distinct solution forms, yet the equivalence of these solutions is not immediately apparent. For example, consider the simple lattice walk (walk with allowed steps $\{\uparrow,\downarrow,\leftarrow,\rightarrow\}$), starting form the origin $(0,0)$, restricted in the quarter-plane. Using bijections and inclusion-exclusion, the solution is given by \cite{guy1992lattice},
\begin{align}
    |P^{++}_n((0,0)\to(i,j))|=\binom{n}{\frac{n+i-j}{2}}\binom{n+2}{\frac{n-i-j}{2}}-\binom{n+2}{\frac{n+i-j}{2}+1}\binom{n}{\frac{n-i-j}{2}-1},
\end{align}
where $|P^{++}_n((0,0)\to(i,j))|$ denotes the number of $n$-step walks from $(0,0)$ to $(i,j)$. 

Using the algebraic kernel method \cite{bousquet2005walks}, the solution takes the form,
\begin{align}
    Q(x,y,t)=[x^>y^>]\frac{xy-x/y-y/x+1/x/y}{K(x,y)}\label{sol alg ker}.
\end{align}
where $Q(x,y,t)$ (abbreviated as $Q(x,y)$) is the generating function,
\begin{align}
    Q(x,y)=\sum_{i,j,k}q_{ijn}x^iy^jt^n,
\end{align}
with $q_{ijn}$ representing the number of $n$-step paths from $(0,0)$ to $(i,j)$ and $i,j,n\geq 0$. $[x^i]f(x)$ denotes the coefficient of $x^i$ term in $f(x)$. and $[x^>]f(x)$ extracts terms with positive powers of $x$. The kernek $K(x,y)$ is defined as,
\begin{align}
    K(x,y)=1-t(x+1/x+y+1/y).
\end{align}

The obstinate kernel method \cite{prodinger2004kernel,banderier2002basic} first solves the boundary values $Q(x,0)$ and $Q(0,y)$ instead of $Q(x,y)$ directly. The solution reads,
\begin{align}
    Q(x,0)=\frac{1}{tx}[x^>]Y_0(x)(x-1/x)\label{simple result},
\end{align}
where $Y_0(x),Y_1(x)$ are roots of $K(x,Y)=0$. $Y_0(x,t)$ is treated as a formal power series in $t$. Both kernel methods utilize positive degree term extraction ($[x^>]$), a technique common in solving discrete difference equations \cite{buchacher2022orbit}.

It is straightforward to obtain the solution form of bijections and inclusion-exclusion from the algebraic kernel method. For the simple lattice walk, $|P^{++}_n((0,0)\to(i,j))|=q_{i,j,n}=[x^iy^jt^n]Q(x,y,t)$ for $Q(x,y,t)$ in \eqref{sol alg ker}. This can be checked by direct calculations.

The connection between algebraic and obstinate kernel methods arises from:
\begin{equation}
\frac{1}{K(x,y)}=\frac{1}{a(x,t)(Y_0-Y_1)}\left( \sum_{i\geq0}\left(\frac{Y_0(x)}{y}\right)^i+\sum_{i>0}\left(\frac{y}{Y_1(x)}\right)^i\right), \label{canonical}
\end{equation}
and
\begin{align}
    [x^>]f(x,Y_0(x))=[x^>]f(x,y)-[x^>]\Big(K(x,y)[y^>]\frac{f(x,y)-f(x,1/y)}{K(x,y)}\Big)\label{trans 2}.
\end{align}
Here, $Y_0(x)$ and $Y_1(x)$ are two roots of $K(x,y)=0$\footnote{we always choose $Y_0$ as the root whose series expansion is a formal power series of $t$.}. \eqref{trans 2} and \eqref{canonical} link $[x^>y^>]\frac{1}{K(x,y)}$ to $[x^>]f(x,Y_0(x))$. The number of configurations (results of bijection and inclusion-exclusion) can also be calculated from the solution of the obstinate kernel method directly via the techniques in \cite{banderier2019}, a variant of Lagrange-B\"urmann inversion formula.

Unlike the combinatorial approaches, the RBVP methods yields a solutions in integral form,
\begin{align}
    c(x,t)Q(x,0,t)=\frac{1}{2\pi i}\oint_{X_0[y_1,y_2]}sY_0(s)\frac{\partial_s w(s,t)}{w(s,t)-w(x,t)}ds\label{RBVP walk}.
\end{align}
Here, $c(x,t)$ is a known coefficient. $X_0[y_1,y_2]$ denotes a contour on the $x$-plane. $X_0(y)$ is the roots of the kernel $K(X,y)=0$. If we write $X_0(y)=\frac{-b(y)-\sqrt{\Delta(y)}}{2a(y)}$, $y_1,y_2$ correspond to ``small root"s of $\Delta(y)$\footnote{In the combinatorial insight, 'small roots' are formal power series in $t$. In the analytic insight, they lie inside the unit circle for small fixed $t$.}.  $w(x,t)$ is a conformal mapping with additional gluing properties, called conformal gluing function. It is originally expressed via Weierstrass $\wp$ functions \cite{fayolle2017random} and later received algebraic expressions for models associated with finite groups \cite{raschel2012counting}. This integral form derives from solving Riemann boundary value problems (RBVPs) with Carleman shifts \cite{Litvinchuk2000}. Here is the definition:
\begin{definition}
 Find a function $\Phi^+$ holomorphic in some domain $L^+$, the limiting value of which on the closed contour $L$ are continuous and satisfies the equation,
\begin{align}
\Phi^+(\alpha(s))=G(s)\Phi^+(s)+g(s), \qquad s\in L\label{BVP1},
\end{align}
where
\begin{enumerate}
\item For any $s \in L$, $g$ and $G$ are H\"older continuous on $L$ and $G(s)\neq 0$.
\item The function $\alpha(s)$ is a one-to-one mapping of $ L\to L$ such that $\alpha(\alpha(s))=s$ and $\alpha$ is H\"older continuous.
\end{enumerate}
\end{definition}
A function $f$ is said to be H\"older continuous \cite{omnes1958solution} if for any two points $s_1,s_2$ on $L$, one can find positive constants $A$ and $\mu$, such that,
\begin{align}
    |f(s_1)-f(s_2)|\leq A|s_1-s_2|^{\mu}.
\end{align}
In addition to the basic definitions, we let,
\begin{align}
    \begin{split}
        &G(s)G(\alpha(s))=1\\
        &G(\alpha(s))g(s)+g(\alpha(s))=0,
    \end{split}
\end{align}
to avoid the trivial cases that we can solve \eqref{BVP1} directly by substitution,
\begin{align}
    \Phi^+(s)=\frac{G(\alpha(s))g(s)+g(\alpha(s))}{1-G(s)G(\alpha(s))}.
\end{align}
As shown in \cite{Litvinchuk2000}, there exists a function $z=w(x)$, which is holomorphic in $L^+$ except one point $x_0$ and satisfies the following gluing property,
\begin{align}
w(\alpha(s))-w(s)=0\qquad s\in L.\label{conformal}
\end{align}
It is a conformal mapping from the domain $L^+$ in $x$-plane to $z$-plane with cut $\lfloor ab\rfloor$, where $s\in L$ maps to $u\in [ab]$. The inverse map is $x=w^{-1}(z)$.

Let $\chi=-\frac{Arg(G(w^{-1}(u)))|_{\lfloor ab\rfloor}}{2\pi}$. $Arg$ denotes the argument. $\chi$ is called the index. $\lfloor ab\rfloor$ refers to the lower cut of $[ab]$.  If $\chi= -1$, 
the solution reads,
\begin{align}
    \Phi^{+}(x)=\frac{X(w(x))}{2\pi i}\oint_{\lfloor ab\rfloor}\frac{g(w^{-1}(u))}{X^+(u)(u-w(x))}du=-\frac{X(w(x))}{2\pi i}\oint_{L_d}\frac{\partial_s w(s)g(s)}{X^+(w(s))(w(s)-w(x))}ds,
\end{align}
where
\begin{align}\label{index rbvp}
    \begin{cases}
        & X(z)=(z-b)^{-\chi}e^{\Gamma(z)}\\
        &\Gamma(z)=\frac{1}{2\pi i}\int_{\lfloor ab\rfloor}\frac{log(G(w^{-1}(u)))}{u-z}du.
    \end{cases}
\end{align}
 $L_d$ is the lower half of $L$ (the preimage of $\lfloor ab\rfloor$). If $\chi<0$, the homogeneous problem ($g(s)=0$) has no solution. The nonhomogeneous problem is solvable if $|-\chi-1|$ extra conditions are satisfied,
\begin{align}
    \frac{1}{2\pi i}\int_{\lfloor ab \rfloor}\frac{g(w^{-1}(u))u^{k-1}}{X^+(u)}du=0\qquad k=1,2,\dots -\chi-1.\label{RBVP requirement}
\end{align}
If $\chi\geq 0$, \eqref{BVP1} has $1+\chi$ linearly independent solutions,
\begin{align}
     \Phi^{+}(x)=\frac{X(w(x))}{2\pi i}\oint_{\lfloor ab\rfloor}\frac{g(w^{-1}(u))}{X^+(u)(u-w(x))}du+X(w(x))P_{\chi}(x).\label{RBVP solution 2}
\end{align}
where $P_{\chi}(x)$ is a polynomial of degree $\chi$.

\eqref{RBVP walk} corresponds to $G(z)=1$ and $\chi=0$\footnote{The constant term of $c(x,t)Q(x,0,t)$ vanishes by the definition of lattice walk, forcing $P_0(x)=0$}. But for three-quadrant walks \cite{raschel2018walks} or weighted walks \cite{fayolle2017random}, $\chi\neq 0$ in general.

A natural question arises: Does \eqref{RBVP walk} equal \eqref{simple result} for simple lattice walks? Furthermore, the RBVP introduces $\chi$  (governing solution multiplicity)  and $w(x)$ (defining the integral contour). $\chi$ characterize the number of linearly independent solutions and $w(x)$ characterize the integral. While these are central to RBVP, combinatorial approaches \cite{mishna2009two,bousquet2005walks,beaton2019quarter,beaton2018exact,bousquet2016elementary} implicitly obscure them.

To show where and how $\chi$ emerges in the obstinate kernel method, we extend the problem to quarter-plane walks with boundary interactions \cite{beaton2018exact,beaton2019quarter,xu2022interacting}. We show that the combinatorial correspondence of $\chi$ characterizes the solvability of certain models with interactions.

To show where and how $w(z)$ emerges in the obstinate kernel method, we will compare the combinatorial and analytic insights of the lattice walk problems, and find a combinatorial version of the conformal gluing function.

\subsection{Organization of this paper}
The objective of this paper is to introduce the index $\chi$ and the conformal gluing function $z=w(x)$ into the kernel method. We first review the idea of the kernel method and the Riemann boundary value problem in \cref{uniform} and demonstrating how the index can be naturally defined in the combinatorial insight. Due to the differences between the obstinate kernel method and the RBVP approach, index $\chi$ in the kernel method may not be unique, leading to distinct phenomena. We use an example in lattice walk to show this in \cref{5 and 15}.  

From \cref{combinatorial cfg} to \cref{tutte}, we show that the conformal gluing function is introduced into the obstinate kernel method differently from the RBVP framework. We prove that under a M\"obius transform, the conformal gluing function from the RBVP can be adapted to the kernel method as a combinatorial conformal mapping. This conformal mapping can be treated as a generalization of the week invariant in Tutte's invariant method \cite{raschel2020counting}.

\section{Definition and Notations}\label{def}
\subsection{Notations for algebra}
Suppose $f(x)$ is a series with both $x$ and $1/x$ terms. We denote $[x^i]f(x)$ as the coefficient of $x^i$ in $f(x)$. $[x^>]f(x)$, $[x^<]f(x)$ and $[x^\geq]f(x)$ as the terms of positive, negative and non-negative powers of $x$ in $f(x)$.

For a ring or a field $\mathbb{K}$, we denote
\begin{enumerate}
\item $\mathbb{K}[t]$ as the set of polynomials in $t$ with coefficients in $\mathbb{K}$;
\item $\mathbb{K}[t,\frac{1}{t}]$ as the set of polynomials in $t$ and $\frac{1}{t}$ with coefficients in $\mathbb{K}$.
\item $\mathbb{K}[[t]]$ as the set of formal power series in $t$ with coefficients in $\mathbb{K}$;
\item $\mathbb{K}((t))$ as the set of Laurent series in $t$ with coefficients in $\mathbb{K}$;
\item $\mathbb{K}(t)$ as the set of rational functions in $t$ with coefficients in $\mathbb{K}$.
\end{enumerate}
The notations extend to multiple variables. For example $\mathbb{R}((x))((t))$ refers to the set of Laurent series in $t$ with coefficients in the set of formal power series of $x$ with real coefficients. Also notice that multivariate Larent expansion are defined iteratedly \cite{melczer2021invitation} and $\mathbb{R}((x))((t))\neq \mathbb{R}((t))((x))$.

We consider the following set of functions,
    \begin{enumerate}
\item \textbf{Algebraic:} $f(x)$ is algebraic over $x$ if there exists a polynomial equation $P(f,x)=0$ with coefficients in $\mathbb{C}(x)$.
\item \textbf{D-finite:} $f(x)$ is D-finite (short for differentiably finite, also called holonomic) over $x$ if there exists a linear equation $L(f,f',\dots f^{(n)})=0$ with coefficients in $\mathbb{C}(x)$.
\item \textbf{D-algebraic:} $f(x)$ is D-algebraic (differentiably algebraic, also called hyper-algebraic) over $x$ if there exists a polynomial equation $P(f,f',\dots f^{(n)})=0$ with coefficients in $\mathbb{C}(x)$.
\item \textbf{Hyper-transcendental} $f(x)$ does not satisfy any of the previous three conditions.
    \end{enumerate}
\begin{figure}[ht!]
\centering
\includegraphics[scale=0.25]{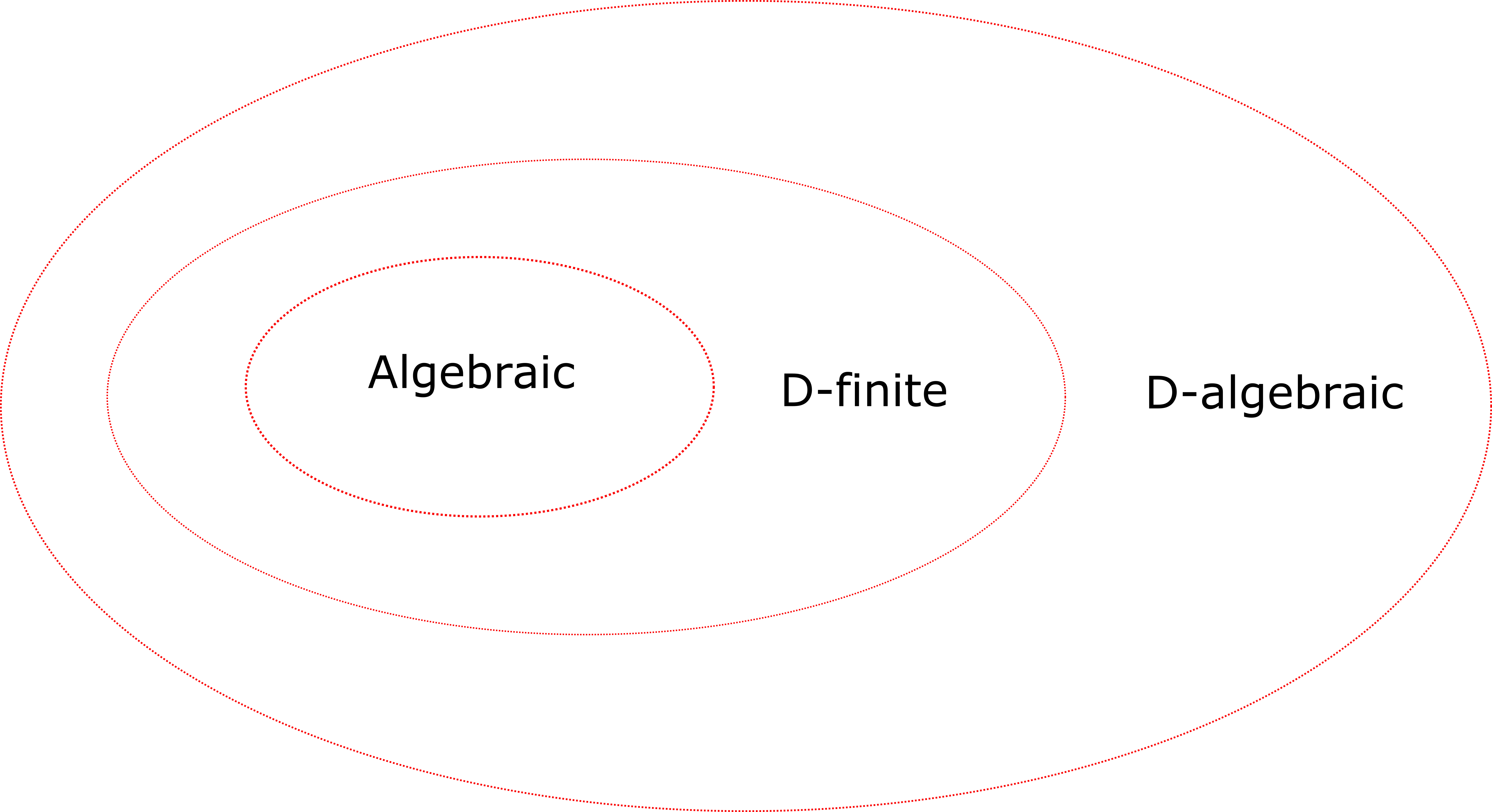}
\caption{The relation between algebraic, D-finite and D-algebraic}
\label{fig The relation between algebraic, D-finite and D-algebraic}
\end{figure}
\subsection{Notations for lattice walk models}
To include the index $\chi$ in a lattice walk model, we introduce the boundary weights. We denote $q_{n,k,l,h,v,u}$ as the number of paths of length $n$ that start at $(0,0)$, end at $(k,l)$, and visit vertices on the horizontal boundary (except the origin) $h$ times, on the vertical boundary (except the origin) $v$ times and the origin $u$ times. The generating function is
\begin{align}
Q(x,y,t;a,b,c)\equiv Q(x,y)=\sum_{n}t^n\sum_{k,l,h,v,u}q_{n,k,l,h,v,u}x^k y^l a^h b^v c^u &\equiv \sum_{n}t^n Q_n(x,y) \notag \\ &\equiv\sum_{i,j}Q_{i,j}x^iy^j\label{function}.
\end{align}
For convenience, we always omit $t$ in the notation.

We define line-boundary terms:
\begin{equation}
[y^i]Q(x,y,t;a,b,c)\equiv Q_{-,i}(x)=\sum_{n}t^n\sum_{k,h,v,u}q_{n,k,i,h,v,u}x^k a^h b^v c^u.
\end{equation}
This is the generating function of walks ending on the line $y=i$. $Q_{i,-}(y)$ is defined similarly. For walks ending on the line $y=0$ ($x=0$), we prefer a more convenient expression $Q(x,0)$ or $Q(0,y)$.

For walks without interactions, just set $a,b,c$ equals $1$ and we get the corresponding $Q(0,y)$, $Q(x,0)$ and $Q(x,y)$.

\subsection{Derivation of the functional equation}\label{Model}
Consider a walk starting from the origin with allowed steps $\mathcal{S} \subseteq \{-1,0,1\}\times \{-1,0,1\}\setminus \{0\}\times \{0\}$. The \emph{step generator} $S$ is
\begin{align}
S(x,y) = \sum_{(i,j)\in\mathcal{S}}x^i y^j.
\end{align}
It can be written as
\begin{align}
S(x,y)=A_{-1}(x)\frac{1}{y}+A_0(x)+A_1(x)y=B_{-1}(y)\frac{1}{x}+B_0(y)+B_{1}(y)x\label{S2}
\end{align}
$|S|$ refers to the number of allowed steps.
\begin{theorem}\label{theorem1}\cite{xu2022interacting}
For a lattice walk restricted to the quarter-plane, starting from the origin, with weight `$a$'  associated with steps ending on the $x$-axis (except the origin), weight `$b$' associated with steps ending on $y$-axis (except the origin) and weight `$c$' associated with steps ending at the origin, the generating function $Q(x,y)$ satisfies the following functional equation:
\begin{multline}
K(x,y)Q(x,y)=\frac1c + \frac1a\Big(a-1 - taA_{-1}(x)\frac{1}{y}\Big)Q(x,0) + \frac1b\Big(b - 1 - tbB_{-1}(y)\frac{1}{x}\Big)Q(0,y) \\
+\left(\frac{1}{abc}(ac+bc-ab-abc)+\frac{t[x^{-1}y^{-1}]S(x,y)}{xy}\right)Q(0,0)\label{functional}
\end{multline}
where $K(x,y)=1-tS(x,y)$. $K(x,y)$ is called the kernel of the walk. 
\end{theorem}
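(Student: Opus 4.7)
The plan is a recursive ``append-a-step'' derivation. Any non-empty quarter-plane walk $w$ factors uniquely as $w=w'\cdot s$ for a shorter quarter-plane walk $w'$ ending at $(i,j)$ and a step $s=(s_x,s_y)\in\mathcal{S}$ such that $(i+s_x,j+s_y)$ is still in the quarter-plane. Summing the weights over all such decompositions, while recording that the new step contributes both a monomial $t\,x^{s_x}y^{s_y}$ and the boundary weight $\omega(i+s_x,j+s_y)\in\{1,a,b,c\}$ of the landing vertex, expresses $Q(x,y)-1$. A naive expansion would give $tS(x,y)Q(x,y)$, which is wrong in two ways: it includes steps exiting the quarter, and it gives every landing weight $1$ instead of $\omega$. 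The proof has to correct both defects in turn.

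Standard inclusion--exclusion isolates the exiting steps: subtract $tA_{-1}(x)/y\cdot Q(x,0)$ and $tB_{-1}(y)/x\cdot Q(0,y)$, then add $t[x^{-1}y^{-1}]S(x,y)/(xy)\cdot Q(0,0)$ to restore the doubly-subtracted $(-1,-1)$-step from the origin. With $a=b=c=1$ this already reproduces the classical Bousquet-M\'elou--Mishna kernel equation. To install the weights, I would regroup the remaining non-exiting steps by landing-vertex type and add $(a-1)$, $(b-1)$, $(c-1)$ times the corresponding landing series. These landing series can be re-expressed through $Q(x,0)$, $Q(0,y)$, and $Q(0,0)$, because a step lands on, say, the $x$-axis exactly when either $w'$ already ended on the $x$-axis and $s$ is horizontal, or $w'$ ended one row above and $s$ has $s_y=-1$. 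Collecting like terms assembles the prefactors $\frac{1}{a}\bigl(a-1-taA_{-1}(x)/y\bigr)$ and $\frac{1}{b}\bigl(b-1-tbB_{-1}(y)/x\bigr)$ in front of $Q(x,0)$ and $Q(0,y)$.

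The main obstacle is the triple bookkeeping at the origin: a step landing at $(0,0)$ is caught simultaneously by the $(a-1)$ correction (origin-ending walks are counted inside $Q(x,0)$), by the $(b-1)$ correction (likewise inside $Q(0,y)$), and by the $(c-1)$ correction proper. Resolving this three-way overlap, together with the offset coming from the empty walk, which carries no boundary weight but whose contribution must combine with the other corrections to yield the constant $1$ on the right-hand side, produces both the $1/c$ term and the composite coefficient
\[
\frac{1}{abc}(ac+bc-ab-abc) \;=\; \frac{1}{a}+\frac{1}{b}-\frac{1}{c}-1
\]
in front of $Q(0,0)$. To guard against sign errors I would (i) specialize $a=b=c=1$ to recover the classical equation, and (ii) expand both sides to order $t$ on the simple walk with step set $\{(1,0),(-1,0),(0,1),(0,-1)\}$, which already exercises the full coefficient structure and catches any mismatch in the $\omega(\cdot)$ accounting.
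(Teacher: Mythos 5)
Your proposal is correct and rests on the same fundamental ``append a last step'' recursion that the paper uses; the difference is one of organization, and it is worth flagging because your organization is actually leaner. The paper enumerates seven disjoint cases (empty walk; interior; arriving at the $x$-axis; along the $x$-axis; arriving at the $y$-axis; along the $y$-axis; ending at origin), sums them, and then must spend extra work eliminating the intermediate unknowns $Q_{-,1}(x)$, $Q_{1,-}(y)$, $Q_{0,1}$, $Q_{1,0}$, $Q_{1,1}$ via the identities $Q(x,0)=\text{(i)+(iii)+(iv)+(vii)}$, $Q(0,y)=\text{(i)+(v)+(vi)+(vii)}$, and a boundary relation for $Q_{0,0}$. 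Your ``naive kernel plus weight corrections'' plan, if you use the slickest form of the correction, avoids those corner unknowns entirely: the $N$-weighted landing series on the positive $x$-axis is just $(Q(x,0)-Q_{0,0})/a$ (since $Q(x,0)$ already carries the factor $a$ on its final visit, and subtracting $Q_{0,0}$ removes both the origin-ending walks and the empty walk), and likewise $(Q(0,y)-Q_{0,0})/b$ and $(Q_{0,0}-1)/c$ for the $y$-axis and the origin. Writing
\begin{align*}
Q(x,y)-1 = N(x,y) + \tfrac{a-1}{a}\bigl(Q(x,0)-Q_{0,0}\bigr) + \tfrac{b-1}{b}\bigl(Q(0,y)-Q_{0,0}\bigr) + \tfrac{c-1}{c}\bigl(Q_{0,0}-1\bigr),
\end{align*}
with $N(x,y)$ the inclusion--exclusion-corrected $tS(x,y)Q(x,y)$, and collecting coefficients reproduces the claimed equation directly, including the constant $1-\tfrac{c-1}{c}=\tfrac1c$ and the $Q(0,0)$ coefficient $\tfrac1a+\tfrac1b-\tfrac1c-1$. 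The one place your sketch is slightly more roundabout than it need be is where you re-derive the landing series from the ``horizontal step or drop from row one'' decomposition; that route reintroduces $Q_{-,1}(x)$ and would force the same elimination machinery the paper uses, whereas the observation above sidesteps it. Your two sanity checks (specializing $a=b=c=1$ and order-$t$ matching on the simple walk) are exactly the right safeguards.
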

We refer to \cite{xu2022interacting} for detailed proof. We will directly use \eqref{functional} in this paper.
\section{A Uniform Functional Equation for Different Methods}\label{uniform}
\subsection{A sketch of the kernel method approach for simple lattice walk }\label{kernel method}
To show how we obtain a functional equation analogous to the Riemann boundary value problem \eqref{BVP1}, we consider the simple lattice walk in the quarter plane. $S(x,y)=x+1/x+y+1/y$ for this model. By \eqref{functional} with $a=b=c=1$, The functional equation reads
\begin{align}
    K(x,y)Q(x,y)=1-\frac{t}{y}Q(x,0)-\frac{t}{x}Q(0,y)\label{func model1}
\end{align}
where
\begin{align}
    K(x,y)=1-t(x+1/x+y+1/y)\label{kernel}
\end{align}
It has two roots,
\begin{align}
\begin{split}
&Y_0(x)=-\frac{\sqrt{\left(t \left(x+\frac{1}{x}\right)-1\right)^2-4 t^2}}{2 t}+\frac{1}{2 t}-\frac{1}{2} \left(x+\frac{1}{x}\right)=t+ \left(x+\frac{1}{x}\right)t^2+O(t^3)\\
&Y_1(x)=\frac{\sqrt{\left(t \left(x+\frac{1}{x}\right)-1\right)^2-4 t^2}}{2 t}+\frac{1}{2 t}-\frac{1}{2} \left(x+\frac{1}{x}\right)=\frac{1}{t}+\frac{1+x^2}{x}-t-\frac{1+x^2}{x}t^2+O(t^3).
\end{split}
\end{align}
The kernel $K(x,y)$ is invariant under the following two involutions,
\begin{align}
    \phi:(x,y) \to (1/x,y) \qquad\text{and}\qquad  \psi:(x,y)\to (x,1/y),
\end{align}
and these two involution general the group,
\begin{align}
(x,y), (1/x,y),(x,1/y),(1/x,1/y)
\end{align}
It is a $D_2$ group (Dihedral group).

The idea of the obstinate kernel method, is that if we substitute the formal power series (in $t$) roots of $xyK(x,y)=0$ into \eqref{func model1}, the left hand-side is $0$ and is invariant under the involution $\phi: (x,Y_0(x))\to (\bar{x},Y_0(\bar{x}))=(\bar{x},Y_0(x))$. Since $Y_0(x)$ is unchanged under the involution, we can eliminate the unknown function $Q(0,Y_0)$. The choice of formal series root here is important since substitution with $1/t$ terms is not well defined in general \cite{xin2004ring}.

By substitution and applying the transformation $x\to \bar{x}$, we obtain two equations,
\begin{align}
    &xY_0-txQ(x,0)-tY_0Q(0,Y_0)=0\label{func simple}\\
    &\bar{x}Y_0-t\bar{x}Q(\bar{x},0)-tY_0Q(0,Y_0)=0
\end{align}
Here $Y_0\equiv Y_0(x)=Y_0(\bar{x})$. A linear combination eliminates $Q(0,Y_0)$,
\begin{align}
    (x-\bar{x})Y_0=txQ(x,0)-t\bar{x}Q(\bar{x},0)\label{sep}.
\end{align}
In \eqref{sep}, the unknown function $F(x,0)\in \mathbb{C}[x][[t]]$ and the unknown function $F(\bar{x},0)\mathbb{C}[\bar{x}][[t]]$ are separated. We can take $[x^>]$ and $[x^<]$ of \eqref{sep},
\begin{align}
    \begin{split}
        &txQ(x,0)=[x^>](x-\bar{x})Y_0\\
        &t\bar{x}Q(\bar{x},0)=[x^<](x-\bar{x})Y_0.\label{solution kernel}
    \end{split}
\end{align}
The solution of $F(x,0)$ is expressed by $x,Y_0$ and $[x^>]$ operator. $Q(0,y)$ can be obtained by consider the substitute of $X_0(y)$ by a similar way.
\begin{align}
\begin{split}
&X_0(y)=-\frac{\sqrt{\left(t \left(y^2+1\right)-y\right)^2-4 t^2y^2}}{2y t}+\frac{1}{2 t}-\frac{1}{2} \left(y+\frac{1}{y}\right)\\
&X_1(y)=\frac{\sqrt{\left(t \left(y^2+1\right)-y\right)^2-4 t^2y^2}}{2y t}+\frac{1}{2 t}-\frac{1}{2} \left(y+\frac{1}{y}\right).\label{X0y}
\end{split}
\end{align}
\subsection{A sketch of the Riemann boundary value problem for simple lattice walk}
The idea of the RBVP approach is to treat \eqref{func model1} as an equation of meromorphic functions. In the analytic insight \cite{fayolle2017random,raschel2012counting}, $t$ is treated as a fixed value with $t<1/|S|$. $|S|$ is the number of allowed steps. $xyK(x,y)=0$ is considered as an algebraic curve. The functional equation \eqref{func model1} is defined as formal series in the domain $\{|x|<1\}\cap\{|y|<1\}\cap\{|z|<1\}$. $Q(x,0),Q(0,y)$ are formal series expressions of meromorphic functions in local charts and can be analytic continued to the whole curve.

Denote the polynomial within the square roots of $X_0(y),X_1(y)$ as $\Delta$. $\Delta(x,t)$ has four roots, $x_1,x_2,x_3,x_4$ and $|x_1|<|x_2|<1<|x_3|<|x_4|$. $\Delta(y,t)$ has four roots $|y_1|<|y_2|<1<|y_3|<|y_4|$\footnote{For some other models, there are three roots $|x_1|<|x_2|<1<|x_3|$ with $x_4\to \infty$, or $|y_1|<|y_2|<1<|y_3|$ with $x_4\to \infty$} . 

On the $x$-plane, the Jordan curve $X_0([y_1y_2])$ and $X_1([y_1y_2])$ coincide. The involution $\phi$ is considered as the Galois transformation $(X_0(y),y)\to (X_1(y),y)$. For any point $s\in X_0(y_1y_2)$ and the Galois transformation, we obtain,
\begin{align}
    &sY_0(s)-tsQ(s,0)-tY_0(s)Q(0,Y_0(s))=0\\
    &\phi(s)Y_0(\phi(s))-t\phi(s)Q(\phi(s),0)-tY_0(\phi(s))Q(0,Y_0(\phi(s)))=0
\end{align}
$\phi$ maps $s$ to its conjugate point $\phi(s)$, which is still on the curve $X_0(y_1y_2)$. Thus both equations hold on the curve $X_0(y_1y_2)$ and a linear combination gives,
\begin{align}
sY_0(s)-\phi(s)Y_0(\phi(s))=tsQ(s,0)-t\phi(s)Q(\phi(s),0)\label{sep 2}.
\end{align}
If we denote $\Phi(s)=tsQ(s,0)$, $\alpha=\phi$, \eqref{sep 2} is exactly the Riemann boundary value problem with Carleman shift \eqref{BVP1} with $G(s)=1$. $\chi=0$. There is a conformal gluing function $z=w(x)$, which maps the domain inside $X_0(y_1y_2)$ to a slit plane with cut $[ab]$. If $s\in X_0([y_1y_2])_d$ (lower part of $s\in X_0([y_1y_2])$) is mapped to $u^-$ on $\lfloor ab\rfloor$, $\phi(s)$ is mapped $u^+$ on $\lceil ab\rceil$ ($\lfloor~\rfloor$ and $\lceil~\rceil$ refer to the lower and upper cut). We denote $w^{-1}(z)$ as $x(z)$ to avoid too many exponents in the notation. After conformal mapping, we have,
\begin{align}
    (x(u^-))Y_0(x(u^-))-(x(u^+))Y_0(x(u^+)=tw^{-1}(u^-)Q(x(u^-),0)-tx(u^+)Q(x(u^+),0).
\end{align}

The condition
\begin{align}
    \frac{1}{2\pi i}\int_{\lfloor ab \rfloor}\frac{(x(u^+)Y_0(x(u^+)-x(u^-)Y_0(x(u^-))}{u}du=0.
\end{align}
is fulfilled. Then, the solution reads,
\begin{align}
    tx(z)Q(x(z),0)=\frac{1}{2\pi i}\int_{\lfloor ab \rfloor}\frac{(x(u^+)Y_0(x(u^+)-x(u^-)Y_0(x(u^-))}{u-z}du=0.\label{RBVP solution 1}
\end{align} 
By change of variable, we get \eqref{RBVP walk}.
\subsection{A generic form for the combinatorial analog of RBVP}
Comparing \eqref{sep} and \eqref{sep 2}, the forms of the equations differ by $x\to X_0(s)$. However, it is not rigorous to claim that they are equivalent since the definitions of the functions are different. Both the obstinate kernel method and the Riemann boundary value problem arrive at an equation in this form and we consider \eqref{sep} a combinatorial analog of Riemann boundary value problem with Carleman shift.

To be consistent with the general form of RBVP \eqref{BVP1}, we define the combinatorial Riemann boundary value problem (cRBVP) as,
\begin{align}
   Q(1/x,0)+G(x,t)Q(x,0)=g(x,t),\label{general}
\end{align}
$Q(1/x,0)$ and $Q(x,0)$ are defined as formal series of $t$. We still consider the non-trivial case,
\begin{align}
\begin{split}
    &G(x,t)G(1/x,t)=1\\
    &G(1/x,t)g(x,t)+g(1/x,t)=0  
\end{split}
\end{align}
From the calculation of the simple lattice walk, we note that $G(x,t)$ and $g(x,t)$ are rational functions of $x,t, Y_0,Y_1$. Normally, they can be denoted as a function $p+q\sqrt{\Delta}$ where $p,q,\Delta$ are rational in $x,t$. We may further consider them in $\mathbb{C}((x))((t))$ or $C((1/x))((t))$ or even $\mathbb{C}((x,1/x))((t))$ in the following section.

\section{The Arise of Index}\label{ss}
\subsection{Canonical factorization}
To understand the index, we first consider factorizing $G(x,t)$. Here we apply Gessel's canonical factorization theorem,
\begin{theorem}[Theorem 4.1 in \cite{gessel1980factorization}]\label{canonical factor of f}
Let $f$ be a function in $\mathbb{C}[[x,\frac{t}{x}]]$ with constant term $1$. Then $f$ has a unique decomposition $f=f_-f_0f_+$, where $f_-$ is of the form $1+\sum_{i>0,j>0}a_{ij}t^ix^{-j}$, $f_0$ is of the form $1+\sum_{i>0}a_i t^i$, and $f_+$ is of the form $1+\sum_{i\geq 0,j>0}a_{ij}t^ix^j$.
\end{theorem}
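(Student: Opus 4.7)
The plan is to prove both existence and uniqueness by passing to the logarithm. Work in the complete local ring $R = \mathbb{C}[[x, t/x]]$ with maximal ideal $\mathfrak{m}$ generated by $x$ and $t/x$. Because $f$ has constant term $1$, we have $f-1 \in \mathfrak{m}$, so $(f-1)^k \in \mathfrak{m}^k$ for every $k$; the series $\log f = \sum_{k\geq 1}(-1)^{k-1}(f-1)^k/k$ is therefore $\mathfrak{m}$-adically convergent and yields an element of $\mathfrak{m}$, and $\exp$ is its two-sided inverse on $\mathfrak{m}$.

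The key structural observation is that $\mathfrak{m}$ splits additively into three pieces according to the sign of the $x$-exponent of each monomial. A nonzero monomial $t^i x^j$ appearing in an element of $R$ satisfies $i \geq 0$ and $i + j \geq 0$, and if it lies in $\mathfrak{m}$ then $(i,j)\neq(0,0)$. Let $L_-$ be the subspace spanned by monomials with $j<0$ (which forces $i \geq -j > 0$), $L_0$ the subspace spanned by monomials $t^i$ with $i > 0$, and $L_+$ the subspace spanned by monomials $t^i x^j$ with $i \geq 0, j > 0$. Every element of $\mathfrak{m}$ decomposes uniquely as $\ell_- + \ell_0 + \ell_+$ with $\ell_\alpha \in L_\alpha$. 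Moreover each $L_\alpha$ is closed under multiplication: for example $(t^{i_1}x^{-j_1})(t^{i_2}x^{-j_2}) = t^{i_1+i_2}x^{-(j_1+j_2)}$ still has strictly positive $t$-exponent and strictly negative $x$-exponent, so $L_-\cdot L_- \subseteq L_-$, and analogously for the other two pieces.

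For existence, I split $\log f = \ell_- + \ell_0 + \ell_+$ along this decomposition and set $f_\alpha = \exp(\ell_\alpha)$. Commutativity of $R$ gives $f = \exp(\log f) = f_- f_0 f_+$. Closure of $L_\alpha$ under multiplication shows every summand of $\exp(\ell_\alpha) - 1 = \sum_{k\geq 1} \ell_\alpha^k / k!$ lies in $L_\alpha$, so each $f_\alpha$ has exactly the shape specified in the statement. For uniqueness, suppose $f = g_- g_0 g_+$ is any second factorization of the required form. Since $g_\alpha - 1 \in L_\alpha$ and $L_\alpha$ is closed under multiplication, $(g_\alpha - 1)^k \in L_\alpha$ for every $k\geq 1$, hence $\log g_\alpha \in L_\alpha$. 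Comparing $\log g_- + \log g_0 + \log g_+ = \log f = \ell_- + \ell_0 + \ell_+$ and invoking uniqueness of the additive splitting forces $\log g_\alpha = \ell_\alpha$, and exponentiating recovers $g_\alpha = f_\alpha$.

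The only content beyond bookkeeping is this additive decomposition of $\mathfrak{m}$ together with the closure of each $L_\alpha$ under multiplication; once these are in place the log/exp trick linearises the factorisation and makes both halves immediate. The main subtlety to watch is tracking which $(i,j)$ exponents are actually available in $\mathbb{C}[[x, t/x]]$ — namely $i \geq 0$ and $i + j \geq 0$ — so that $L_-, L_0, L_+$ genuinely exhaust $\mathfrak{m}$ and are pairwise disjoint; this is what ensures the $\ell_\alpha$ extracted from $\log f$ are well defined and unique.
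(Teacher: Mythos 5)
Your proof follows the same log/exp route as the paper's: take $\log f$, split the monomials of $\log f$ by the sign of the $x$-exponent, and exponentiate each piece separately, using closure of each graded piece under multiplication to confirm the factors have the required shape. You also supply the uniqueness half explicitly (via $\log g_\alpha \in L_\alpha$ and additive uniqueness of the splitting), which the paper's one-line sketch leaves implicit; this is a welcome completion of the same argument rather than a different one.
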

\begin{proof}
Let $\log(f)=\sum_{ij}b_{ij}t^ix^j$. Then $f_-=e^{\sum_{i\geq 0,j<0}b_{ij}t^ix^j}$. $f_0=e^{\sum_{i\geq0}^{\infty}b_{i0}t^i}$, and $f_+=e^{\sum_{i\geq 0,j>0}b_{ij}t^ix^j}$. These three are all well defined series in $\mathbb{C}[[x,\frac{t}{x}]]$. Expand the exponential functions and the proof is finished.
\end{proof}
Inspired by \cref{canonical factor of f}, if $f$ has a Laurent expansion in $\mathbb{C}((x))((t))$, $\log(f(x,t))$ can always be expanded as $[x^<]\log f+[x^0]\log f+[x^>]\log f+k\log x+l\log t$ in its convergent domain. Suppose $f$ is expanded as a series of $t$
\begin{align}
f=a_{l}(x)t^l+a_{1}(x)t^{l+1}+a_{2}(x)t^{l+2}+\dots.
\end{align}
$l=\min\{i : a_i \neq 0\}$. If we expand $a_i(x)$ in $\mathbb{C}((x))$, we write $a_l$ as
\begin{align}
a_l=b_{k}x^k+b_{k+1}x^{k+1}+\dots=b_{k}x^k(1+r(x)).\label{cal k}
\end{align}
where $b_k$ is the coefficient of the smallest degree of $x$ in $a_l$ (can be negative) and $r(x)$ is a formal series of $x$ without constant terms. We take the $x^kt^l$ out and write $f$ as
\begin{align}
    f=(b_kx^kt^l)\times(1+r(x)+tg(x,t)).
\end{align}
$g(x,t)$ is a formal series of $t$. Then, by the Taylor expansion of $\log(1+x)$,
\begin{align}
\begin{split}
    \log(f(x,t))&=\log(b_kx^kt^i)+\log(1+r(x)+g(x,t))\\
    &=\log(b_kx^kt^l)+\log(1+r(x))+\log\left(1+\frac{g(x,t)}{1+r(x)}\right)\\
    &=\log(b_kx^kt^l)+\log(1+r(x))+1+\frac{g(x,t)}{1+r(x)}+\frac{1}{2}\left(\frac{g(x,t)}{1+r(x)}\right)^2+\dots
    \end{split}
\end{align} 
We can further expand $\log(1+r(x))$ as a Laurent series of $x$ and $\frac{g(x,t)}{1+r(x)}$ as formal series of $t$, then $log(f(x,t))$ is a formal series of $t$ and we can take $[x^>],[x^0],[x^<]$ terms of it.
%%To simplify the calculation, we always divide $f$ by $x^k$ to eliminate the $\log x$ term in the expansion.

We may also expand $f$ and $a_i(x)$ in $\mathbb{C}((\frac{1}{x}))((t))$. The only difference is 
\begin{align}
a_l=b_{-k}x^{-k}+b_{-(k+1)}x^{-(k+1)}+\dots=b_{-k}x^{-k}(1+r(1/x))
\end{align}
The results is not unique. It can be functions in $\mathbb{C}((x))[[t]]$, $\mathbb{C}((1/x))[[t]]$ or $\mathbb{C}((x,1/x))[[t]]$ depending on the convergent domain we choose. If we fixed the convergent domain, then the factorization is unique. In lattice walk problem, we face a function in the form $\log(p+q\sqrt{\Delta})$. It is meromorphic in some annulus.

\subsection{An index criteria for linear independent solution}\label{chi section}
It is straightforward to consider the factorization as $G(x,t)=t^lx^{-\chi}e^{G_-}e^{G_0}e^{G_+}$ and $\chi$ is the index. By direct calculation, we have,
\begin{align}
   \frac{x^\chi}{e^{G_-}}Q(1/x,0)+t^le^{G_+}e^{G_0}Q(x,0)=\frac{g(x,t)}{e^{G_-}}.\label{general 2}
\end{align}

This is the suitable form for taking $[x^{\geq}]$ and $[x^<]$.
\begin{enumerate}
    \item Consider $\chi\geq 0$. $\frac{x^\chi}{e^{G_-}}Q(1/x,0)$ reads,
    \begin{align}
        \frac{x^\chi}{t^ie^{G_-}}Q(1/x,0)=x^{\chi}\left(1+\frac{a_1(t)}{x}+\frac{a_2(t)}{x^2}+\dots\right)\left(Q(0,0)+\frac{Q_{1,0}}{x}+\frac{Q_{2,0}}{x^2}+\dots\right)\label{chi}
    \end{align}
     $a_i(t)$ are the coefficients of series expansion of $\frac{1}{e^{G_-}}$. $[x^\geq]\frac{x^\chi}{e^{G_-}}Q(1/x,0)$ contains $\chi+1$ unknown terms such as $Q(0,0),Q_{1,0},\dots$. $[x^\geq]$ of \eqref{general 2} reads,
    \begin{align}
     t^le^{G_+}e^{G_0}Q(x,0)+b_{0}(x,t)Q(0,0)+b_{1}(x,t)Q_{1,0}+\dots b_{\chi}(x,t)Q_{\chi,0}=[x^\geq]g(x,t),
    \end{align}
    where $b_{i}$ can be calculated explicitly by \eqref{chi}. We have $\chi+1$ linearly independent solutions. We need some other relations to characterize these $\chi+1$ unknown functions $Q_{i,0}$.
    \item If $\chi < 0$, the $[x^{-n}]$ of left hand-side of \eqref{general} vanishes for $0<n\leq -\chi-1$. Thus, extra conditions
    \begin{align}
        [x^{-n}]g(x,t)=0,\label{extra}
    \end{align}
    for $n=1,2,\dots -\chi-1$ shall be satisfied. 
\end{enumerate}
Comparing \eqref{extra} with \eqref{RBVP requirement} in \cref{motivation}, the index defined by canonical factorization plays exactly the same role as the index in RBVP in \eqref{index rbvp}. It determines the number of independent solutions. However, $\chi$ is currently not well defined, as it is not unique. We solve this problem in the next section.

\subsection{Integral representation for the index}\label{sec Integral representation}
We first give an analytic insight for the $[x^>]$ operator. We denote the contour integral $\oint= \frac{1}{2\pi i}\int$ for simplicity.
\begin{lemma}
Consider $f(z)$ analytic in some annulus. Then $[z^\geq]f(z)=\oint \frac{ f(s)}{(s-z)}ds$. The contour is around $0$, inside the analytic annulus and $z$ is inside the domain surrounded by this contour. \label{lemma integral representation}
\end{lemma}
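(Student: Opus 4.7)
The plan is to prove this by combining the Laurent expansion of $f$ in the annulus with a geometric expansion of the Cauchy kernel $1/(s-z)$. Since $f$ is analytic in some annulus $r_1 < |s| < r_2$ containing the contour, we can write
\begin{align*}
f(s) = \sum_{n \in \mathbb{Z}} a_n s^n,
\end{align*}
convergent uniformly on the contour. By assumption the contour encircles $0$ and $z$ lies inside it, so on the contour we have $|z| < |s|$, and we may expand
\begin{align*}
\frac{1}{s-z} = \frac{1}{s}\cdot\frac{1}{1-z/s} = \sum_{k \geq 0} \frac{z^k}{s^{k+1}},
\end{align*}
again with uniform convergence in $s$ on the contour (for $z$ fixed).

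Substituting both expansions into $\oint \frac{f(s)}{s-z}\,ds$ and interchanging the summation and integration (justified by the uniform convergence just mentioned), the proof reduces to evaluating $\oint s^{n-k-1}\,ds$. By the standard residue calculation, this integral equals $1$ when $n = k$ and vanishes otherwise. Thus only terms with $n = k \geq 0$ survive, and the double sum collapses to
\begin{align*}
\oint \frac{f(s)}{s-z}\,ds = \sum_{n \geq 0} a_n z^n = [z^{\geq}]f(z),
\end{align*}
which is exactly the claim.

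I expect the proof to be essentially routine once the right pairing of expansions is chosen; the only subtle point is making sure the geometric expansion of $1/(s-z)$ is done in the correct variable, namely $z/s$ rather than $s/z$, because $z$ lies inside the contour and $s$ on it. If one expanded in $s/z$ instead, the resulting sum would produce $[z^{<}]f(z)$ with a sign, corresponding to the complementary part of the Laurent decomposition. So the main (small) obstacle is the bookkeeping of which region contains $z$ relative to the contour, which determines the direction of the geometric series and hence which part of the Laurent series the integral picks up.
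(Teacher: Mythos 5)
Your proof is correct and uses essentially the same ingredients as the paper's: the Laurent-coefficient integral formula, the geometric expansion of $1/(s-z)$ in powers of $z/s$ (valid because $z$ lies inside the contour), and term-by-term integration. The only difference is direction — the paper starts from $[z^\geq]f(z)$ and sums the coefficient integrals into the Cauchy kernel, while you start from the integral and unravel it — so the two arguments are the same chain of equalities read in opposite order.
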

\begin{proof}
Via complex analysis,
\begin{align}
\begin{split}
    &[z^\geq]f(z)=\frac{1}{2\pi i}\int\frac{f(s)}{s}ds+\frac{z}{2\pi i}\int\frac{f(s)}{s^2}ds+\dots\\
    &=\oint\frac{f(s)}{s}\frac{1}{1-z/s}ds\\
    &=\oint\frac{f(s)}{s-z}ds
    \end{split}
\end{align}
\end{proof}

The integral representation of $[z^<]$, $[z^>]$ and $[z^0]$ are straight forward.
\begin{align}
    \begin{split}
        &[z^<]f(z)=\oint\frac{f(s)}{z-s}ds\\
        &[z^>]f(z)=\oint\frac{z}{s}\frac{f(s)}{s-z}ds\\
        &[z^0]f(z)=\oint\frac{f(s)}{s}ds
    \end{split}
\end{align}
Notice that for $[z^<]f(z)$, $z$ is outside the domain surrounded by the contour, since $|s/z|<1$.
\begin{figure}
    \centering
\begin{tikzpicture}
\tikzset{12/.style={circle, line width=1.5pt, draw=black, fill=red, inner sep=0.5pt}}
    \draw[help lines,step = 0.5] (-4.5,-2.5) grid (4.5,2.5); %辅助线格子
    \draw[-latex] (-5,0) -- (5,0);
    \draw[-latex] (0,-3) -- (0,3);%实心箭头
    \draw[red] (0,0) circle (1);
    \draw[red] (0,0) circle (2);
    \draw  (1,0) node [12]{};
    \node[above] at (1,0) {$\mathbf{C_2}$};
        \draw  (2,0) node [12]{};
    \node[above] at (2,0) {$\mathbf{C_1}$};
            \draw  (1.5,0.5) node [12]{};
    \node[above] at (1.5,0.5) {$x$};
\end{tikzpicture}
\caption{The contour is chosen as $C_1$ and $C_2$. $F$ is analytic between $C_1,C_2$. }\label{pic 11} 
\end{figure}

We denote the boundary of the annulus as contours $C_1, C_2$ and $C$ is some contour in between (See Fig \ref{pic 11}). The integral definition of index is,
\begin{align}
    -\chi_C(G(x,t))=\oint_C d\log G(x,t)=\frac{Arg(G(x,t))}{2\pi i}.\label{index combinatorics}
\end{align}
 The calculation of the argument depends on the singularities of $\log G(x,t)$. For $G_+,G_-$ to be formal series of $t$, we put the singularities whose Laurent expansions (around $t=0$) belong to $t\mathbb{C}[[t]]$ inside the contour $C_2$ and those singularities whose Laurent expansions contain $1/t$ terms outside the contour $C_1$. For those singularities whose Laurent expansions have valuation $0$ in $t$, we can put them either inside $C_2$ or outside $C_1$. Since the singularities are logarithmic singularities, we need to ensure we are integrating single valued functions by multiplying $x^{\chi_C}$.

Then by \cref{lemma integral representation}, we have an integral representation for canonical factorization,
\begin{align}
G_+&=[x^>]\log((p+q\sqrt\Delta)x^{\chi_C}/t^l)=\oint_{C_1}\frac{x}{s}\frac{\log((p+q\sqrt
{\Delta})x^{\chi_C}/t^l)}{s-x}ds\\
G_-&=[x^<]\log((p+q\sqrt\Delta)x^{\chi_C}/t^l)=\oint_{C_2}\frac{\log((p+q\sqrt
{\Delta})x^{\chi_C}/t^l)}{x-s}ds\\
G_0&=[x^0]\log((p+q\sqrt\Delta)x^{\chi_C}/t^l)=\oint_{C_1}\frac{\log((p+q\sqrt
{\Delta})x^{\chi_C}/t^l)}{s}ds
\end{align}
and
\begin{align}
\begin{split}
    &G(x,t)=G_++G_-+G_0+l\log t-\chi_C \log x\\
    &=\oint_{C_1\cup(-C_2)}\frac{\log\Big((p+q\sqrt{\Delta})x^{\chi_C}/t^i\Big)}{s-x}+l\log t-\chi_C \log x=\log(p+q\sqrt{\Delta})
\end{split}
\end{align}

We may change $\chi$ in \cref{chi section} to $\chi_C$ and the results remain the same.

\section{An Example with Non-trivial Index}\label{5 and 15}
To demonstrate how the index affects the solution of lattice walk problem, we consider the following pair of models, walk with allowed steps $\{\nearrow,\nwarrow,\downarrow\}$ and walks with allowed steps $\{\swarrow,\searrow,\uparrow\}$, both with interactions $a\neq b\neq c>1$. These two models are reverse to each other and we solved them in \cite{xu2022interacting}. There are $7$ pairs of interacting models which share similar properties: their allowed steps are reverse to each other and $\swarrow$ only appears in one model. In \cite{xu2022interacting,beaton2019quarter}, we stated that for each pair, we cannot solve the model without $\swarrow$ directly. The complete solution shall be solved by the symmetry of these two models. In this section, we show that the statement is wrong by analyzing the index.

\subsection{Walk with allow steps $\{\nearrow,\nwarrow,\downarrow\}$}
Let us first consider $\{\nearrow,\nwarrow,\downarrow\}$. The functional equation reads,
\begin{multline}
K(x,y)Q(x,y)=\frac{1}{a b c
}\Big(b c  \left(-\frac{a t}{y}+a-1\right)Q(x,0)+a c  \left(-\frac{b t y}{x}+b-1\right)Q(0,y)\\
+ (b c-a ((b-1) c+b))Q(0,0)+a b\Big)\label{model 5 func}.
\end{multline}
where
\begin{align}
K(x,y)=1-t \left(x y+\frac{y}{x}+\frac{1}{y}\right).
\end{align}
The roots are
\begin{align}
\begin{split}
&Y_0=\frac{x-\sqrt{-4 t^2 x^3-4 t^2 x+x^2}}{2 \left(t x^2+t\right)}=t+\frac{t^3 \left(x^2+1\right)}{x}+\bigO(t^5)\\
&Y_1=\frac{x+\sqrt{-4 t^2 x^3-4 t^2 x+x^2}}{2 \left(t x^2+t\right)}=\frac{x}{t \left(x^2+1\right)}-t+\bigO(t^3).
\end{split}
\end{align}
The symmetry group is 
\begin{equation}
\Big\{(x,y),\left(\frac{1}{x},y\right),\left(x,\frac{1}{(x+\frac1x)y}\right),\left(\frac1x,\frac{1}{(x+\frac1x)y}\right)\Big\}.
\end{equation}
We apply the group elements $(x,y)$ and $(1/x,y)$ to the functional equation and let $y=Y_0$. By simple calculation as we did in \cref{kernel method}

\begin{multline}\label{model 5 eq1}
0=\frac{1}{x}Q\left(\frac{1}{x},0\right)-\frac{a b t (x^2-1) Y_0^2}{cx  \left(-a t+a Y_0-Y_0\right) \left(b t Y_0-b x+x\right)}\\
+\frac{t (x^2-1) Y_0^2  (a b c+a b-a c-b c)Q(0,0)}{cx\left(-a t+a Y_0-Y_0\right) \left(b t Y_0-b x+x\right)}
-\frac{ \left(b t x Y_0-b+1\right)Q(x,0)}{b t Y_0-b x+x}.
\end{multline}
Substitute the value of $Y_0$ and by simplifications, \eqref{model 5 eq1} can be written as
\begin{align}
P_c+P_{00}Q(0,0)+P_{x,0}Q(x,0)+Q\left(\frac{1}{x},0\right)=0,\label{model 5 eq ex1}
\end{align}
where
\begin{align}
\begin{split}
P_{x,0}&=-\frac{(2 b^2 t^2 x+b^2 x^2+b^2-3 b x^2-3 b+2 x^2+2)}{2 \left(b^2 t^2+b^2 x^3-2 b x^3-b x+x^3+x\right)}-\frac{(b-1) b (x^2-1) \sqrt{- \left(4 t^2 x^2+4 t^2-x\right)/x}}{2\left(b^2 t^2+b^2 x^3-2 b x^3-b x+x^3+x\right)}.
\end{split}
\end{align}
$P_c,P_{0,0}$ are also of the form $p+q\sqrt\Delta$. In \cite{xu2022interacting}, we consider
\begin{align}
L=\log\left(-(2 b^2 t^2 x+b^2 x^2+b^2-3 b x^2-3 b+2 x^2+2)-(b-1) b (x^2-1) \sqrt{- \left(4 t^2 x^2+4 t^2-x\right)/x}\right).\label{error}
\end{align}
And write,
\begin{align}\label{model 5 eq2}
P_{x,0}=-\frac{ e^{L_+}e^{L_0}e^{L_-}}{2(b^2 t^2 + x - b x + x^3 - 2 b x^3 + b^2 x^3)}.
\end{align}
The idea of taking out the polynomial factor is a simple imitation of the kernel method for one dimensional problems \cite{prodinger2004kernel} since we understand polynomial factors very well. The calculation was performed by first taking the series expansion of $L$ around $t=0$ and then taking the series expansion of $L$ around $x=0$ by mathematica command.

The series expansions of the roots of the polynomial factors in the denominator of $P_{x,0}$ reads,
\begin{align}
\begin{split}
X_1&=\frac{1}{\sqrt{b-1}}+\frac{b^2 t^2}{2-2 b}-\frac{3 b^4 t^4}{8 (b-1)^{3/2}}-\frac{b^6 t^6}{2 (b-1)^2}+\bigO(t^8)\\
X_2&=-\frac{1}{\sqrt{b-1}}+\frac{b^2 t^2}{2-2 b}+\frac{3 b^4 t^4}{8 (b-1)^{3/2}}-\frac{b^6 t^6}{2 (b-1)^2}+\bigO(t^{8})\\
X_3&=\frac{b^2 t^2}{b-1}+\frac{b^6 t^6}{(b-1)^2}+\bigO(t^{7}).\label{roots}
\end{split}
\end{align}
Notice that both series expansions of $X_1,X_2$ contain constant terms. This implies
\begin{align}
\frac{1}{x-X_{1,2}}=\frac{-1}{X_{1,2}(1-\frac{x}{X_{1,2}})}=\frac{-1}{X_{1,2}}\sum_n\left(\frac{x}{X_{1,2}}\right)^n\label{X1}
\end{align}
is still a formal series of $t$ in a suitable annulus. So $\frac{1}{x-X_2}$ and $\frac{1}{x-X_1}$ can be expanded as formal series of $t$ with coefficients in $\mathbb{R}(x)$. $X_3$ does not contain constant term of $t$, so $1/(x-x_3)$ has to be expanded as follow,
\begin{align}
\frac{1}{x-X_3}=\frac{1}{x(1-X_3/x)}=\frac{1}{x}\sum_n\left(\frac{X_3}{x}\right)^n.
\end{align}  
Thus, if we multiply \eqref{model 5 eq ex1} by $\frac{(x-X_3)}{e^{L_-}}$, each term of the equation can be separated into $[x^>]$, $[x^<]$ and $[x^0]$ parts.
We have
\begin{align}
-\frac{e^{L_+} e^{L_0} Q(x,0)}{2(1  + b)^2(x-X_1)(x-X_2)}+[x^>]\frac{(x-X_3)}{e^{L_-}}P_{00}Q(0,0)+[x^>]\frac{(x-X_3)}{e^{L_-}}P_{c} &=0 \label{model 5 eq 3} \\
\frac{\left(x-X_3\right) Q\left(\frac{1}{x},0\right)}{e^{L_-} x}+[x^<]\frac{(x-X_3)}{e^{L_-}}P_{00}Q(0,0)+[x^<]\frac{(x-X_3)}{e^{L_-}}P_{c} &=0 \label{model 5 eq3 2}\\
Q(0, 0)-Q(0, 0) &= 0 \label{model 5 eq 3 2}.
\end{align}

 Unfortunately, this naive attempt did not give us a solution, since the $[x^0]$ part is a trivial $0=0$ equality and we cannot solve $Q(0,0)$ from the $[x^>]$, $[x^<]$ or any $[x^i]$ part by the idea of the 1-D kernel method either.

We solve $Q(0,0)$ of the reverse model instead, since $Q(0,0)$ of a walk and its reverse walk are the same (see \cref{fig Kreweras and Reverse Kreweras}). This can be proved by reversing each step of a configuration starting from $(0,0)$ and ending at $(0,0)$.
\begin{figure}
\centering
\resizebox{0.8\textwidth}{!}{
\begin{minipage}[t]{0.48\textwidth}
\centering
\begin{tikzpicture}
\tikzset{ac/.style={circle, line width=1.5pt, draw=black, fill=red, inner sep=2.5pt}}
\tikzset{bc/.style={circle, line width=1.5pt, draw=black, fill=blue, inner sep=2.5pt}}
\tikzset{cc/.style={circle, line width=1.5pt, draw=black, fill=black, inner sep=2.5pt}}
\tikzset{vert/.style={circle, line width=1.5pt, draw=black, fill=white, inner sep=2.5pt}}
\draw [gray, line width=10pt] (-0.2,5.8) -- (-0.2,-0.2) -- (5.8,-0.2);
\begin{scope}[line width=3pt, decoration={markings,mark=at position 0.65 with {\arrow{>}}}]
\draw [postaction=decorate] (0,0) node [cc] {} -- (1,1);
\draw [postaction=decorate] (1,1) node [vert] {} -- (1,0);
\draw [postaction=decorate] (1,0) node [ac] {} -- (2,1);
\draw [postaction=decorate] (2,1) node [vert] {} -- (3,2);
\draw [postaction=decorate] (3,2) node [vert] {} -- (2,3);
\draw [postaction=decorate] (2,3) node [vert] {} -- (2,2);
\draw [postaction=decorate] (2,2) node [vert] {} -- (1,3);
\draw [postaction=decorate] (1,3) node [vert] {} -- (1,2);
\draw [postaction=decorate] (1,2) node [vert] {} -- (0,3);
\draw [postaction=decorate] (0,3) node [bc] {} -- (0,2);
\draw [postaction=decorate] (0,2) node [bc] {} -- (0,1);
\draw [postaction=decorate] (0,1) node [bc] {} -- (0,0);
\end{scope}
\draw [line width=2pt, ->] (5,3) -- (6,4);
\draw [line width=2pt, ->] (5,3) -- (4,4);
\draw [line width=2pt, ->] (5,3) -- (5,2);
\end{tikzpicture}
\label{fig:paths}
\end{minipage}
\begin{minipage}[t]{0.48\textwidth}
\centering
\begin{tikzpicture}
\tikzset{ac/.style={circle, line width=1.5pt, draw=black, fill=red, inner sep=2.5pt}}
\tikzset{bc/.style={circle, line width=1.5pt, draw=black, fill=blue, inner sep=2.5pt}}
\tikzset{cc/.style={circle, line width=1.5pt, draw=black, fill=black, inner sep=2.5pt}}
\tikzset{vert/.style={circle, line width=1.5pt, draw=black, fill=white, inner sep=2.5pt}}
\draw [gray, line width=10pt] (-0.2,5.8) -- (-0.2,-0.2) -- (5.8,-0.2);
\begin{scope}[line width=3pt, decoration={markings,mark=at position 0.65 with {\arrow{<}}}]
\draw [postaction=decorate] (0,0) node [cc] {} -- (1,1);
\draw [postaction=decorate] (1,1) node [vert] {} -- (1,0);
\draw [postaction=decorate] (1,0) node [ac] {} -- (2,1);
\draw [postaction=decorate] (2,1) node [vert] {} -- (3,2);
\draw [postaction=decorate] (3,2) node [vert] {} -- (2,3);
\draw [postaction=decorate] (2,3) node [vert] {} -- (2,2);
\draw [postaction=decorate] (2,2) node [vert] {} -- (1,3);
\draw [postaction=decorate] (1,3) node [vert] {} -- (1,2);
\draw [postaction=decorate] (1,2) node [vert] {} -- (0,3);
\draw [postaction=decorate] (0,3) node [bc] {} -- (0,2);
\draw [postaction=decorate] (0,2) node [bc] {} -- (0,1);
\draw [postaction=decorate] (0,1) node [bc] {} -- (0,0);
\end{scope}
\draw [line width=2pt, ->] (5,3) -- (5,4);
\draw [line width=2pt, ->] (5,3) -- (4,2);
\draw [line width=2pt, ->] (5,3) -- (6,2);
\end{tikzpicture}
\end{minipage}
}
\caption{Examples of a configuration of $\{\nwarrow,\nearrow,\downarrow\}$ and its reverse walk $\swarrow,\searrow,\uparrow$ ending at the origin.}\label{fig Kreweras and Reverse Kreweras}
\end{figure}
\subsection{Walk with allow steps $\{\searrow,\swarrow,\uparrow\}$}
The allowed steps of the reverse model are $\{\swarrow, \searrow,\uparrow\}$. The functional equation reads
\begin{multline}
K(x,y)Q(x,y)=\frac{1}{a b c}\left(+b c \left(-at \left(\frac{x}{y}+\frac{1}{x y}\right)+a-1\right)Q(x,0)\right.\\
\left.+a c\left(-\frac{b t}{x y}+b-1\right)Q(0,y)+ \left(b c-a ((b-1) c+b)\frac{a b c t }{x y}\right)Q(0,0)+a b\right),
\end{multline}
where
\begin{align}
K(x,y)=1 - t \left(\frac{1}{xy} + \frac{x}{y} + y\right).
\end{align}
The formal series root of $K(x,y)$ is
\begin{align}
Y_0 &= -\frac{\sqrt{-4 t^2 x^3-4 t^2 x+x^2}-x}{2 t x}= t \left(x+\frac{1}{x}\right)+t^3 \left(x^2+\frac{1}{x^2}+2\right)+\bigO(t^5).
\end{align}
The symmetry group is the same as per,
\begin{align}
\Bigg\{(x,y),\left(\frac{1}{x},y\right),\left(x,\frac{(x+\frac{1}{x})}{y}\right),\left(\frac{1}{x},\frac{(x+\frac{1}{x})}{y}\right)\Bigg\}.
\end{align}
By similar calculations, we have,
\begin{align}
\begin{split}
&\frac{Q(x,0) \left(b t x-b Y_0+Y_0\right)}{b t-b x Y_0+x Y_0}-\frac{1}{x}Q\left(\frac{1}{x},0\right)=\\
&-\frac{a b t (x^2-1) Y_0}{c \left(a t x^2+a t-a x Y_0+x Y_0\right) \left(-b t+b x Y_0-x Y_0\right)}+\frac{ (a-c)b t (x^2-1) Y_0 Q(0,0)}{c \left(a t x^2+a t-a x Y_0+x Y_0\right) \left(-b t+b x Y_0-x Y_0\right)}.
\end{split}
\end{align}
Substitute the value of $Y_0$ in, we have,
\begin{align}\label{model 15 eq1}
P'_c+P'_{00}Q(0,0)+P'_{x,0}Q(x,0)-\frac{1}{x}Q\left(\frac{1}{x},0\right)=0,
\end{align}
where
\begin{align}
\begin{split}
P'_{x,0}&=\frac{(2 b^2 t^2 x+b^2 x^2+b^2-3 b x^2-3 b+2 x^2+2)}{2  \left(b^2 t^2+b^2 x^3-2 b x^3-b x+x^3+x\right)}-\frac{b(b-1) (x^2-1) \sqrt{- \left(4 t^2 x^2+4 t^2-x\right)/x}}{2\left(b^2 t^2+b^2 x^3-2 b x^3-b x+x^3+x\right)}
\end{split}
\end{align}
and $P'_{c},P_{00}$ are in the form $p+q\sqrt\Delta$. In \cite{xu2022interacting}, we consider the canonical factorization of,
\begin{equation}
L_2=\log\left((2 b^2 t^2 x+b^2 x^2+b^2-3 b x^2-3 b+2 x^2+2)-\frac{b(b-1) (x^2-1) \sqrt{- \left(4 t^2 x^2+4 t^2-x\right)/x}}{ x }\right).
\end{equation}
Then
\begin{align}
P'_{x,0}=-\frac{e^{L_{2+}}e^{L_{20}}e^{L_{2-}}}{2(b^2 t^2 + x - b x + x^3 - 2 b x^3 + b^2 x^3)}.
\end{align}
$P'_{x,0}$ has the same denominator as $P_{x,0}$. Multiply both sides of \eqref{model 15 eq1} by $\frac{x-X_3}{e^{L_{2-}}}$ and separate the $[x^>]$, $[x^<]$ and $[x^0]$ terms. This time, the $[x^0]$ term is no longer $0=0$ but a non-trivial equation. We have
\begin{align}
T_c+T_{1}Q(0,0)+T_{2}Q(0,0)+T_{3}Q(0,0)=0,
\end{align}
where
\begin{align}
T_c &= [x^0]\frac{x-X_3}{e^{L_{2-}}}P'_c\\
T_1 &= [x^0]\frac{x-X_3}{e^{L_{2-}}}P'_{x,0}\\
T_{2} &= -[x^0]\frac{x-X_3}{xe^{L_{2-}}}\\
T_3 &= [x^0]\frac{x-X_3}{e^{L_{2-}}}P'_{00}.
\end{align}
Then $Q(0,0)=-\frac{T_c}{T_{1}+T_2+T_3}$.

\subsection{Index calculation}
Although it is still possible to solve these two models by the reverse symmetry, let us consider the index of $xP_{x,0}$ and $xP'_{x,0}$ (multiply $x$ to match our theory in \cref{chi section}).
\begin{align}
    \begin{split}
        xP_{x,0}&=-\frac{x(2 b^2 t^2 x+b^2 x^2+b^2-3 b x^2-3 b+2 x^2+2)}{2 \left(b^2 t^2+b^2 x^3-2 b x^3-b x+x^3+x\right)}-\frac{(b-1) b (x^2-1) \sqrt{-x \left(4 t^2 x^2+4 t^2-x\right)}}{2 \left(b^2 t^2+b^2 x^3-2 b x^3-b x+x^3+x\right)}\\
        xP'_{x,0}&=\frac{x(2 b^2 t^2 x+b^2 x^2+b^2-3 b x^2-3 b+2 x^2+2)}{2  \left(b^2 t^2+b^2 x^3-2 b x^3-b x+x^3+x\right)}-\frac{b(b-1) (x^2-1) \sqrt{-x \left(4 t^2 x^2+4 t^2-x\right)}}{2\left(b^2 t^2+b^2 x^3-2 b x^3-b x+x^3+x\right)}.
    \end{split}\label{index choose circle}
\end{align}
Notice that $xP_{x,0}$ and $xP'_{x,0}$ are conjugate (under Galois transform) to each other. The product of the numerator of $P_{x,0}$ and $P'_{x,0}$ is 
\begin{align}
    -4x \left(b^2 t^2+b^2 x^3-2 b x^3-b x+x^3+x\right) \left(b^2 t^2 x^3+b^2-b x^2-2 b+x^2+1\right).
\end{align}
The series expansion of roots of $\left(b^2 t^2 x^3+b^2-b x^2-2 b+x^2+1\right)=0$ are,
\begin{align}
\begin{split}
    & J_1=\frac{b-1}{b^2 t^2}-b^2 t^2-\frac{2 b^6 t^6}{b-1}+O\left(t^7\right)\\
    &J_2=-\sqrt{b-1}+\frac{b^2 t^2}{2}-\frac{5 b^4 t^4}{8 \sqrt{b-1}}+\frac{b^6 t^6}{b-1}+O\left(t^7\right)\\
    &J_3=\sqrt{b-1}+\frac{b^2 t^2}{2}+\frac{5 b^4 t^4}{8 \sqrt{b-1}}+\frac{b^6 t^6}{b-1}+O\left(t^7\right).
\end{split}
\end{align}
The series expansion of roots of $ \left(b^2 t^2+b^2 x^3-2 b x^3-b x+x^3+x\right)=0$ are $X_1,X_,X_3$ from \eqref{roots}. The zeros of $P_{x,0},P'_{x,0}$ come from $0, X_1,X_2,X_3,J_1,J_2,J_3$ and the singularities come from $X_1,X_2,X_3$.

We denote the roots of $\sqrt{-x \left(4 t^2 x^2+4 t^2-x\right)}$ as $0,x_1,x_2$. Direct calculation shows $x_1=4t^2+O(t^6)$ and $x_2=\frac{1}{4t^2}+O(t^2)$. $P_{x,0}$ and $P'_{x,0}$ have branch cuts $[0x_1]$ and $[x_2\infty]$ and we shall include $[0x_1]$ inside the contour $C$. Recall $X_3=\frac{b^2 t^2}{b-1}+\bigO(t^{6})$ in \eqref{roots}. For $b>1$, the leading term of $X_3$ is always greater than the leading term of $x_1$ unless $b=2$. For $b=2$, $X_3-x_1\sim |O(t^{10})|$. Thus, for $t$ small enough, $X_3$ do not involve the branch cut. $J_1$ is always outside the contour $C$ and does not contribute. We ignore $J_1$ in the following calculation. $0$ is at the end of the branch cut, so we shall consider it carefully. Let us denote the numerator of $xP_{x,0}$ and $xP'_{x,0}$ as
\begin{align}
    \begin{split}
        &Nu(P_{x,0})=x p(x,b,t)+q(x,b,t)\sqrt{x(x-x_1)(x_2-x)}\\
        &Nu(P'_{x,0})=x p(x,b,t)-q(x,b,t)\sqrt{x(x-x_1)(x_2-x)}
    \end{split}
\end{align}
 For convenience, we write,
\begin{align}
    \begin{split}
        &Nu(xP_{x,0})=p(x,b,t)\sqrt{x}\left(\sqrt{x} +\frac{q(x,b,t)}{p(x,b,t)}\sqrt{(x-x_1)(x_2-x)}\right)\\
        &Nu(xP'_{x,0})=p(x,b,t)\sqrt{x}\left(\sqrt{x}-\frac{q(x,b,t)}{p(x,b,t)}\sqrt{(x-x_1)(x_2-x)}\right)\label{argument}
    \end{split}
\end{align}
$q(x,b,t)<0$ for $|x|<1$. The sign of $q(x,b,t)$ is fixed. $p(x,b,t)$ reads,
\begin{align}
    p(x,b,t)=2 b^2 t^2 x+(b-2) (b-1) x^2+(b-2) (b-1).
\end{align}
The determinant (as a function of $x$) of $p(x,b,t)=4 b^4 t^4-4 \left(b^2-3 b+2\right)^2<0$ if $b\neq 0$ and $t$ is some small positive value. Thus, the sign of $p(x,b,t)$ is characterized by $b$. If $b=2$, $p(x,b,t)=8t^2x\geq 0$ on $[0x_1]$. $P_{x,0}$ and $P'_{x,0}$ shall be factored as 
\[8t^2\sqrt{x}(x\sqrt{x}\pm \frac{q(x,b,t)}{8t^2}\sqrt{(x-x_1)(x_2-x)}).\]
The factor $p(x,b,t)$ (or $8t^2$) in the front does not contribute to the variation of argument and we assume it is $1$.  

Let $x$ go around the branchcut $[0x_1]$ anticlockwise starting from $0$ with argument $0$. Around $0$, the variation of the argument is determined by $\sqrt{x}$ since $\pm\frac{q(x,b,t)}{p(x,b,t)}\sqrt{(x-x_1)(x_2-x)}$ inside the bracket do not have argument change. So, if $x$ rotates around $x=0$ anticlockwise by $2\pi$, both $P'_{x,0}$ and $P_{x,0}$ rotate anticlockwise by $\pi$.

Then $x$ go along the branch cut $\lfloor0x_1\rfloor$ from below anticlockwise. $\sqrt{x}=\sqrt{e^{2\pi i}|x|}<0$ and it is real and does not have the variation of the argument along $\lfloor0x_1\rfloor$. $\sqrt{(x-x_1)(x_2-x)}$ shall be considered as $-i\sqrt{|(x-x_1)(x-x_2)|}$. The rotation of $Nu(xP_{x,0})$ and $Nu(xP'_{x,0})$ both end on the negative real axis but start with the opposite point on the imaginary axis. We have the following result. For $x\in\lfloor0x_1\rfloor$,
\begin{enumerate}
    \item If $1<b<2$, $p(x,b,t)<0$. $-\frac{q(x,b,t)}{p(x,b,t)}\sqrt{|(x-x_1)(x_2-x)|}<0$. Then $Nu(xP_{x,0})$ starts from the negative imaginary axis and ends on the negative real axis. The variation of the argument is $\frac{\pi}{2}$ clockwise. $Nu(xP'_{x,0})$ starts from the positive imaginary axis and ends on the negative real axis. The variation of the argument is $\frac{\pi}{2}$ anticlockwise.
    \item If $b\geq 2$, $p(x,b,t)>0$. $-\frac{q(x,b,t)}{p(x,b,t)}\sqrt{|(x-x_1)(x_2-x)|}>0$. Then $Nu(xP_{x,0})$ starts from the positive imaginary axis and ends on the negative real axis. The variation of argument is $\frac{\pi}{2}$ anticlockwise. $Nu(xP'_{x,0})$ starts from the negative imaginary axis and ends on the negative real axis. The variation of argument is $\frac{\pi}{2}$ clockwise.
\end{enumerate}

Then, consider the variation of the argument near $x=x_1$, the second term in the bracket of \eqref{argument} is $0$ and the variation of argument depends on $\sqrt{x}$. $\sqrt{x}$ do not have variation of argument at $x=x_1$.

Above the branch cut $\lceil 0x_1\rceil$, $\sqrt{x}=\sqrt{e^{2\pi i}|x|}>0$ and $\sqrt{(x-x_1)(x_2-x)}$ are still considered as $-i\sqrt{|(x-x_1)(x-x_2)|}$ since the relative position of $x$ to $x_1,x_2$ do not change. We have results similar to $\lfloor0x_1\rfloor$ case.

Thus, for the whole contour, we may conclude,
\begin{enumerate}
    \item If $1<b<2$, $Nu(xP'_{x,0})$ rotate $2\pi$ anticlockwise.  $Nu(xP_{x,0})$ does not have the variation of the argument. $0$ contributes to the index of $xP'_{x,0}$.
    \item If $b\geq 2$, $Nu(xP'_{x,0})$ does not have the variation of the argument.  $Nu(xP_{x,0})$ rotate $2\pi$ anticlockwise. $0$ contributes to the index of $xP_{x,0}$.
\end{enumerate}

Other roots are poles. we substitute them into $P_{x,0}$ and $P'_{x,0}$ to see whether they are really roots or not. We denote the roots $a_i$ and singularities $b_i$ of a function as $f=\frac{\{a_i\}}{\{b_i\}}$. Then, by direct calculation, we have,
\begin{enumerate}
    \item If $1<b<2$,
    \begin{align}
    \begin{split}
        &xP'_{x,0}=\frac{\{0,J_2,J_3,X_3\}}{\{X_1,X_2,X_3\}}\\
        &xP_{x,0}=\frac{\{X_1,X_2\}}{\{X_1,X_2,X_3\}}
    \end{split}
    \end{align}
     \item If $b\geq 2$,
    \begin{align}
    \begin{split}
        &xP'_{x,0}=\frac{\{J_2,J_3\}}{\{X_1,X_2,X_3\}}\\
        &xP_{x,0}=\frac{\{0,X_1,X_2,X_3\}}{\{X_1,X_2,X_3\}}
    \end{split}
    \end{align}
\end{enumerate}
In addition, $|J_2|,|J_3|$ are smaller than $|X_1|, |X_2|$ when $1<b<2$ by comparing the leading term of the $t$ series expansions. Likewise, $|J_2|,|J_3|$ are greater than $|X_1|, |X_2|$ when $b>2$.

Thus, the exact solution space is,
\begin{enumerate}
    \item when $1<b<2$, $\chi_Cx(P_{x,0})=1$ for any contour $C$ and $\chi_C(xP'_{x,0})=-1,-2,-3$, depending on whether we put $J_2,J_3,X_1,X_2$ inside the contour. We can solve $Q(0,0)$ by the functional equation of $P'$ but not $P$.
    \item when $b\geq 2$, $\chi_C(xP'_{x,0})=1,2,3$ depending on $C$ and $\chi_C(xP_{x,0})=-1$ since poles and zeros $X_1,X_2,X_3$ coincide. We can solve $Q(0,0)$ by the functional equation of $P$ but not $P'$.
\end{enumerate}
In the calculation \eqref{error}, we factorize $L$ by simply plunging them into Mathematica and force $L=L_++L_0+L_-$. Now we find that the error is that the roots of the denominator coincide with the roots of $L$ and the factorization $L=L_+L_0L_-$ is not well defined with the series expansion of $X_1,X_2,X_3$.

\subsection{A kernel method criteria for linear independent solutions}\label{kernel method criteria}
Recall the original kernel method \cite{prodinger2004kernel} by a simple example. Suppose we have a functional equation
\begin{equation}
(tx^2-x+t)F(x,t)=tF(0,t)-x.\label{kernel method original:functional}
\end{equation}
$F(x,t)=\sum_{i,n}f_{i,n}x^it^n$ is defined as a formal series in $t$ with polynomial coefficients in $x$. To solve $F(x,t)$, we consider the roots of $tx^2-x+t=t(x-x_0)(x-x_1)$
One of the roots have the following series expansions around $t=0$,
\begin{equation}\label{x01}
x_0 = t+t^3+2 t^5+\bigO(t^6)
\end{equation}
We substitute $x_0$ into \eqref{kernel method original:functional}. This gives
\begin{equation}
tF(0,t)-x_1=0.
\label{kernel method original}
\end{equation}

Now assume that we have a ghost $F(1/x,0)$ in \eqref{kernel method original:functional}. Then this is a cRBVP with $\chi_C=-1$ and \eqref{kernel method original} is the extra condition \eqref{extra} that the cRBVP satisfies.

\section{A Combinatorial Analog of Conformal Gluing Function}\label{comparing}
In previous sections, we discussed the combinatorial analog of `index' and show how they affect the solution space of a lattice walk model. Now we turn to the combinatorial analog of the conformal gluing function. We prove the following result in this section.

\begin{theorem}\label{part 2 th 1}
    Consider a quarter-plane lattice walk problem associated with a finite group. The generating functions $Q(x,0)$, $Q(0,y)$, $Q(x,y)$ are defined as formal series in $\mathbb{C}[x,y][[t]]$. There exist a conformal mapping $z=w(x)$ that maps $x$ to a formal series of $1/z$ and $Y_0(x)$ to a formal series of $z$\footnote{ Another choice is mapping $x$ to a formal series of $z$ and $Y_0(x)$ to a formal series of $1/z$.}. For models associated with $D_2$ groups, $w(x)$ has an explicit representation,
    \begin{align}
        x+1/x=z.
    \end{align}
The functional equation after the conformal mapping reads,
\begin{align}
    F(1/z)+G(z)H(z)=g(z),
\end{align}
where $F(1/z)=Q(x(z),0)$ and $H(z)=Q(0,Y_0(x(z)))$. $G(z),g(z)$ are known algebra functions in $z,t$. We can apply the canonical factorization and solve $F(1/z)$, $H(z)$.
\end{theorem}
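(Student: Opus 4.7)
My plan is to prove the theorem in two stages: first verify the explicit $D_2$ formula $w(x)=x+1/x$ by direct substitution in the kernel-method equation, and then extract the general finite-group statement from the algebraic conformal gluing function of \cite{raschel2012counting}. The computational content sits in the $D_2$ case.

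For the $D_2$ setting with $a=b=c=1$, the involutions $x\leftrightarrow 1/x$ and $y\leftrightarrow 1/y$ both fix the kernel, so $K(x,y)$ is a polynomial in $x+1/x$ and $y+1/y$. Consequently the small root $Y_0(x)\in t\,\mathbb{C}[x,1/x][[t]]$ of $K(x,Y)=0$ is invariant under $x\mapsto 1/x$ and depends on $x$ only through $z:=x+1/x$. Substituting $y=Y_0(x)$ into the functional equation of Theorem \ref{theorem1} (a legitimate composition, since $Y_0\in t\,\mathbb{C}[x,1/x][[t]]$) produces a relation
\[
\alpha(x,Y_0)\,Q(x,0)+\beta(x,Y_0)\,Q(0,Y_0)+\gamma(x,Y_0)\,Q(0,0)=\delta(x,Y_0)
\]
with explicit rational coefficients.

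Next I would introduce the substitution $x=x(z):=\tfrac{1}{2}(z-\sqrt{z^2-4})=z^{-1}+z^{-3}+2z^{-5}+\cdots$, the branch of $z=x+1/x$ lying in $z^{-1}\mathbb{C}[[z^{-1}]]$. Two compatibility checks then close the $D_2$ case. First, $Q(x(z),0)=:F(1/z)$ lies in $\mathbb{C}[[1/z]][[t]]$ because $Q(x,0)\in \mathbb{C}[x][[t]]$ is composed with a series in $1/z$ of vanishing constant term. Second, $Y_0(x(z))$ depends only on $z$ by the $D_2$-invariance of $Y_0$, and is a formal series in $t$ with polynomial coefficients in $z$, so $Q(0,Y_0(x(z)))=:G(z)$ is a formal series in $z$ in the sense of the statement. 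The equation then takes the claimed form
\[
P_-(z)\,F(1/z)+P_+(z)\,G(z)=P_c(z),
\]
where $P_-,P_+,P_c$ are algebraic in $z,t$ (inheriting $\sqrt{z^2-4}$ from $x(z)$ and from $Y_0$). Because $F(1/z)$ contributes only non-positive powers of $z$ while $G(z)$ contributes only non-negative ones, the $[z^{>}]$ and $[z^{<}]$ parts separate cleanly, and the canonical factorization of Section \ref{canonical sec} (Theorems \ref{canonical factor of f} and \ref{p+qdelta}) applied to $P_\pm$ recovers $F$ and $G$ term by term.

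For the general finite-group case I would take $w(x)$ to be the algebraic conformal gluing function produced in \cite{raschel2012counting} from the group action on the spectral curve; by construction, $Y_0$ is invariant under the subgroup acting on $x$ alone, which matches the automorphism property of $w$. Choosing the branch of $w^{-1}$ whose series expansion begins with the appropriate monomial in $1/z$, the same two compatibility checks run in parallel. The hard part will be exactly this generalization: the analytic $w$ is specified by its behavior on a closed contour, so promoting it to a formal substitution compatible with the iterated Laurent-series rings $\mathbb{C}((z^{-1}))[[t]]$ versus $\mathbb{C}((z))[[t]]$ requires careful branch selection and a verification that composing an algebraic function with such a branch lands in the expected ring. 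The $D_2$ case masks this difficulty because $w(x)=x+1/x$ admits an elementary inversion by the quadratic formula.
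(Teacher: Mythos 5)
Your $D_2$ argument is sound and is essentially the same computation the paper carries out for \textbf{Model 1} in \cref{combinatorial cfg}: the $x\mapsto 1/x$ invariance of the kernel forces $Y_0$ to depend only on $z=x+1/x$, the branch $x(z)=\tfrac12(z-\sqrt{z^2-4})\in z^{-1}\mathbb{C}[[z^{-1}]]$ makes $Q(x(z),0)$ a formal series in $1/z$ over $t$, and $Y_0(x(z))\in t\,\mathbb{C}[z][[t]]$ makes $Q(0,Y_0(x(z)))$ a formal series in $z$ over $t$. (Your aside that \emph{both} involutions $x\leftrightarrow 1/x$ and $y\leftrightarrow 1/y$ fix the kernel only holds for Models 1--4; for Models 5--16 the second involution is of the form $(x,y)\mapsto\bigl(x,\frac{R(x)}{y}\bigr)$, but you only actually use the first one, so this does not break your argument.)

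The genuine gap is exactly where you flag it: the general finite-group case. You hand the construction off to the conformal gluing function of \cite{raschel2012counting} and say ``choose the branch of $w^{-1}$ whose expansion begins with the right monomial in $1/z$,'' but branch selection alone does not land $x(z)$ in $\mathbb{C}[[1/z]][[t]]$ (with vanishing constant term) or $Y_0(x(z))$ in $\mathbb{C}[[z]][[t]]$. In general $w$ has a pole at some $x_0\neq 0$ inside $X_0([y_1,y_2])$, so $w^{-1}(\infty)=x_0\neq 0$; one first has to normalize by a M\"obius transform so that the pole sits at $x=0$, and then apply a translation in $z$ so that $Y_0(x(z))$ has the claimed shape. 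This normalization, together with the separation of the two branch cuts (the image $[a,b]$ of $X_0([y_1,y_2])$ and the image of $[x_1,x_2]$) into the two complementary components of an analytic annulus, is the content of the paper's \cref{part 2 th 0}, from which \cref{part 2 th 1} is then stated as a corollary. One also needs the paper's \eqref{analytic continuation} to know that $Q(x,0)$ even extends analytically to the region inside $X_0([y_1,y_2])$, and \textbf{Corollary 5.3.5} of \cite{fayolle2017random} (that $Y_0$ takes equal values at conjugate points of the contour) to conclude that the segment $[a,b]$ is not a cut of $Y_0(x(z))$. Without these steps, ``$w^{-1}$ lands in the expected ring'' is an assertion rather than a proof — precisely the difficulty you yourself identify at the end.
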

This theorem also holds for the functional equation for $Q(X_0(y)),Q(0,y)$ with the conformal gluing function $y+1/y=z$ by symmetry.

The condition ``as a formal series of $t$" can also be switched to an analyticity condition,
\begin{theorem}\label{part 2 th 2}
    Consider a quarter-plane lattice walk problem associated with a finite group. The generating functions $Q(x,0),Q(0,y),Q(x,y)$ are defined as meromorphic functions in $x,y$ with fixed value of $t$. The conformal mapping $z=w(x)$ maps some convergent domain of $Q(x,0)$ to a cut plane where $Q(x(z),0)$ is analytic in the plane except the cut and $Q(0,Y_0(x(z)))$ is analytic in some domain around this cut. For models associated with $D_2$ group, $w(x)$ has an explicit representation,
    \begin{align}
        x+1/x=z. \qquad 
    \end{align}
The functional equation after the conformal mapping reads,
\begin{align}
    Q(x(z),0)+G(x(z),t)Q(0,Y_0(x(z)))=g(x(z),t),
\end{align}
This equation is a combinatorial analog of Riemann Boundary value problem in $z$.
\end{theorem}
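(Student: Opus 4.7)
The plan is to argue in parallel with the proof of Theorem \ref{part 2 th 1}, replacing the formal-power-series point of view by an analytic one and reducing to the canonical-factorization machinery developed in Section \ref{canonical sec}.

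First I would verify the conformal-gluing property of the candidate map. The map $w(x)=x+1/x$ satisfies $w(1/x)=w(x)$, so it identifies the orbit $\{x,1/x\}$ of the involution $\alpha(x)=1/x$. Thinking of $x$ in an annulus about the unit circle, $w$ folds this annulus two-to-one onto a slit neighbourhood of $[-2,2]$, with the unit circle mapping onto the slit and the two sides of the slit corresponding to the two preimages $x$ and $1/x$. This is exactly the Carleman shift associated with the $D_2$ symmetry of the kernel, and for every $D_2$ model the relation $K(x,y)=K(1/x,y)$ forces the series root $Y_0(x)$ to be invariant under $x\leftrightarrow 1/x$ and therefore to descend to a single-valued algebraic function of $z$.

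Second I would derive the functional equation for the pair $(F,G)$. As in Section \ref{model 18}, the obstinate kernel substitution $y=Y_0(x)$ in \eqref{functional} (with $a=b=c=1$) yields
\begin{equation*}
c_1(x)\,Q(x,0)+c_2(x)\,Q(0,Y_0(x))+c_0(x)\,Q(0,0)=d(x),
\end{equation*}
with coefficients algebraic in $x$ and $t$. Writing this relation at the two points $x$ and $\alpha(x)=1/x$ and using $Y_0(x)=Y_0(1/x)$, one eliminates one of $Q(x,0),Q(1/x,0)$ by a suitable linear combination. After pulling back through $x=x(z)$, the resulting identity takes the form
\begin{equation*}
P_-(z)\,F(z)+P_+(z)\,G(z)=P_c(z),
\end{equation*}
with $F(z)=Q(x(z),0)$, $G(z)=Q(0,Y_0(x(z)))$, and $P_\pm,P_c$ explicit algebraic functions of $z,t$.

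Third I would check the analyticity statements and apply the factorization. For $|t|$ small enough, $Q(x,0)$ is analytic in a neighbourhood of the closed unit disc, so $F(z)$ is analytic on the image of that neighbourhood under $w$, which is a slit plane cut along $[-2,2]$; across the cut $F$ has the jump $Q(x,0)-Q(1/x,0)$. By the $D_2$ symmetry, $Y_0(x(z))$ is a single-valued analytic function of $z$ in a neighbourhood of $[-2,2]$ and stays in the disc of convergence of $y\mapsto Q(0,y)$ there, so $G(z)$ is analytic across the cut. Then Theorem \ref{p+qdelta} and Corollary \ref{d-algebraic f}, applied in $z$-coordinates exactly as in Sections \ref{canonical sec}--\ref{model 18}, factor $P_\pm$ canonically and solve for $F$ and $G$. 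The main obstacle is precisely this analytic bookkeeping: one must verify model by model that the convergence domain of $Q(x,0)$ for small $|t|$ truly contains an annulus about $|x|=1$, that $Y_0(x(z))$ remains inside the convergence disc of $Q(0,\cdot)$ along the cut, and that no zero of $c_1,c_2$ lands on $[-2,2]$; the rest is a routine translation of Theorem \ref{part 2 th 1} from the formal to the analytic setting, and identifies $w(x)=x+1/x$ as the conformal gluing function of the Carleman RBVP associated with any $D_2$ model.
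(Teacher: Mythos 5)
Your proof only treats the $D_2$ case: you take $w(x)=x+1/x$ as a starting point, verify the gluing property $w(1/x)=w(x)$ directly, and argue that the $D_2$ relation $K(x,y)=K(1/x,y)$ makes $Y_0$ single-valued across the slit. But Theorem~\ref{part 2 th 2} is stated for \emph{all} models associated with a finite group; the explicit Joukowsky form $z=x+1/x$ is only a supplementary remark for the $D_2$ subfamily. For $D_3$ and $D_4$ groups the conformal gluing function is not given by a one-line formula, and the paper does not re-derive it model by model: it quotes the existence and explicit expression of $w$ from the RBVP literature (\cite{raschel2012counting}, \cite{fayolle2017random}), proves \cref{part 2 th 0} about the three structural properties of that $w$ (conformality onto a slit plane; analyticity of $Q(x(z),0)$ off the slit after analytically continuing $Q(x,0)$ into the domain bounded by $X_0([y_1y_2])$, not merely near the unit disc; single-valuedness of $Y_0(x(z))$ across the slit from the gluing identity $w(s)=w(\phi(s))$ combined with $Y_0(X_0([y_1y_2]))=[y_1y_2]$), performs a M\"obius normalisation to put $x(z)$ and $Y_0(x(z))$ on opposite sides of the annulus, and then reads off \cref{part 2 th 1} and \cref{part 2 th 2} as corollaries. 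Your proposal never touches the non-$D_2$ cases, never invokes the extended analyticity domain inside $X_0([y_1y_2])$, and so does not establish the full claim.

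There is also a local muddle in your second step. After substituting $y=Y_0(x)$ into the functional equation, the relation already has the form $c_1(x)Q(x,0)+c_2(x)Q(0,Y_0(x))+c_0(x)Q(0,0)=d(x)$; there is no $Q(1/x,0)$ present, so there is nothing to eliminate. Writing the relation also at $1/x$ and forming a linear combination would eliminate $Q(0,Y_0(x))$ and produce a relation between $Q(x,0)$ and $Q(1/x,0)$ — which is the opposite of what the theorem asks for, since the desired form $P_-F(z)+P_+G(z)=P_c$ keeps exactly $F(z)=Q(x(z),0)$ and $G(z)=Q(0,Y_0(x(z)))$. The paper's proof simply pulls the untouched substituted equation back through $x=x(z)$; the separation into $[z^<]$ and $[z^\geq]$ then comes from the fact that $x(z)\in\mathbb{C}[[1/z]][[t]]$ while $Y_0(x(z))\in\mathbb{C}[[z]][[t]]$, which is precisely what \cref{part 2 th 0} establishes and what your argument needs to justify for every finite group, not only $D_2$.
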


\subsection{Positive term extraction and conformal mapping}\label{combinatorial cfg}
 Recall the kernel method in \cref{kernel method}. The idea of the kernel method is to separate the $[x^>]$ and $[x^<]$ terms of an equation. This requires that after some calculation, the unknown functions remaining in the equation should be $Q(x,0)$ and $Q(1/x,0)$. 
 
 However, we may not be able to find such an equation even in a lattice walk problem. For Gessel's walk \cite{bousquet2016elementary}, the equation derived from the kernel method is,
\begin{align}
    t(1+Y_1(x))Q(0,Y_1(x))-t(1+x^2Y_1(x))Q(0,x^2Y_1(x))=\frac{1}{x}-x.
\end{align}
It is a relation between $Q(0,Y_1(x))$ and $Q(0,x^2Y_1(x))$. $Y_1(x)$ is neither a formal series of $x$ nor a formal series of $1/x$. We cannot apply the operators $[x^>],[x^<]$.

The conformal mapping in RBVP provides a way to deal with this situation in combinatorics. Let us demonstrate this with a simple example.

Recall \eqref{func simple},
\begin{align}
    xY_0(x)-txQ(x,0)-tY_0Q(0,Y_0(x))=0\label{func simple again}
\end{align}
Consider the following substitution $z=w(x)$ (the inverse is denoted as $x=w^{-1}(z)=x(z)$),
\begin{align}
&z=w(x)=x+1/x\\
&Y_0(x(z))=\frac{1}{2t}-z-\frac{\sqrt{(tz-1)^2-4t^2}}{2t}.\label{simple mapping}
\end{align}
$Y_0(x(z))$ can be expanded as a formal series of $t$ with $[z^\geq]$ coefficients,
\begin{align}
    Y_0=t+zt^2+(1+z^2)t^3+O(t^4),
\end{align}
We have two choice for $x=w^{-1}(z)$. Our aim is to choose an $x(z)\in 1/z\mathbb{C}[[1/z]]$. This condition is satisfied by choosing,
\begin{align}
x(z)=\frac{1}{2}(z-\sqrt{z^2-4})=\frac{1}{z}+\frac{1}{z^3}+O\Big(\frac{1}{z}\Big)^4
\end{align}
The Laurent expansion of $x(z)$ requires $z\geq 2$. This choice of the branch of $w^{-1}(z)$ reveals a conformal mapping from the domain inside the unit circle on the $x$-plane, to the whole $z$-plane with infinity except for a cut $[-2,2]$.

One may also interpret the results in an analytic insight. Consider meromorphic functions. We choose the mapping $x=w^{-1}(z)$ such that $Y_0(x(z))$ is analytic at $0$ and $x(z)$ is analytic at $\infty$. Notice that $x(z)$ and $Y_0(x(z))$ have only square root singularities and the branch cuts are $[-2,2]$ and $[1/t-2,1/t+2]$. If $t<1/4$, these two branch cuts do not intersect. Then, $x(z)$ and $Y_0(x(z))$ can be simultaneously expanded as $1/z$ and $z$ in the annulus centered at $0$ with $[-2,2]$ in the finite exterior domain and $[1/t-2,1/t+2]$ in the infinite exterior domain. See \cref{pic 1}.
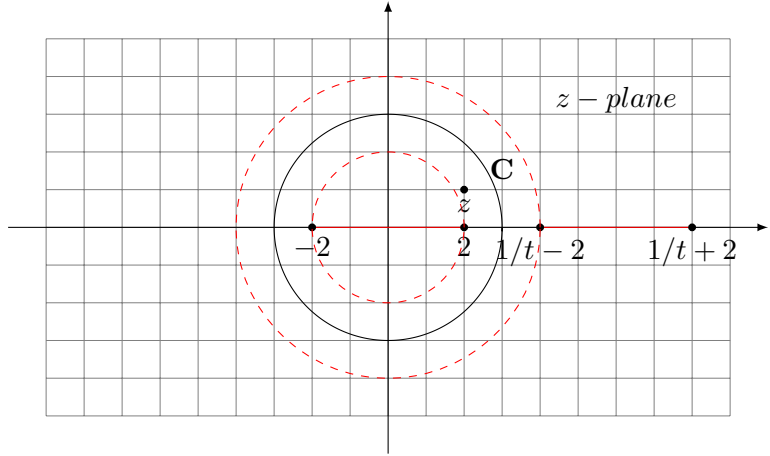
\begin{figure}
    \centering
\begin{tikzpicture}
\tikzset{12/.style={circle, line width=1.5pt, draw=black, fill=red, inner sep=0.5pt}}
    \draw[help lines,step = 0.5] (-4.5,-2.5) grid (4.5,2.5); %辅助线格子
    \draw[-latex] (-5,0) -- (5,0);
    \draw[-latex] (0,-3) -- (0,3);%实心箭头
    \draw[red](-1,0)--(1,0);
    \coordinate (a0) at (-1,0);
    \coordinate (b0) at (1,0);
    \node[below] at (a0) {$-2$};
    \node[below] at (b0) {$2$};
    \draw  (-1,0) node [12]{};
    \draw  (1,0) node [12]{};
    \draw[red](2,0)--(4,0);
    \coordinate (a1) at (2,0);
    \coordinate (b1) at (4,0);
    \node[below] at (a1) {$1/t-2$};
    \node[below] at (b1) {$1/t+2$};
    \draw  (2,0) node [12]{};
    \draw  (4,0) node [12]{};
    \draw[red][dashed] (0,0) circle (1);
    \draw[domain = -2:360][samples = 300] plot({1.5*cos(\x)}, {1.5 *sin(\x)});
    \draw[red][dashed] (0,0) circle (2);
    \coordinate (z) at (3,2);
    \node[below] at (z) {$z-plane$};
    \draw  (1,0.5) node [12]{};
    \node[below] at (1,0.5) {$z$};
    \node[above] at (1.5,0.5) {$\mathbf{C}$};
\end{tikzpicture}
\caption{We choose the contour $C$ of the integral (black circle) to be inside the annulus (between dashed circles). The Laurent expansion of $x(z)\in \mathbb{C}[[1/z]][[t]]$ and $Y_0(x(z))\in\mathbb{C}[[z]][[t]]$.}\label{pic 1} 
\end{figure}

The functional equation reads,
\begin{align}
    x(z)Y_0(x(z))-tx(z)Q(x(z),0)-tY_0(x(z))Q(0,Y_0(x(z)))=0.\label{simple z}
\end{align}
We immediately have,
\begin{align}
\begin{split}
    &tY_0(x(z))Q(0,Y_0(x(z)))=[z^>]x(z)Y_0(x(z))\\
    &tx(z)Q(x(z),0)=[z^<]x(z)Y_0(x(z))\label{model1 result}
\end{split}
\end{align}
By substitution, we solve $Q(x,0)$ and $Q(0,Y_0(x))$,
\begin{align}
\begin{split}
    &tY_0(x)Q(0,Y_0(x))=[z^>]x(z)Y_0(x(z))|_{z=x+1/x}\\
    &txQ(x,0)=[z^<]x(z)Y_0(x(z))|_{z=x+1/x}\label{z}
\end{split}
\end{align}
One can check that this representation of $Q(x,0)$ and $Q(0,Y_0(x))$ coincides with the results \eqref{simple result} from the kernel method by interpreting $[z^\geq],[z^<]$ as a contour integral,
\[tY_0(x)Q(0,Y_0(x))=\oint_C \frac{x(s)Y_0(x(s))}{s-z}ds|_{z=x+1/x}.\]
$s$ integral on a contour $C$ within the annulus with $2<|s|<1/t-2$. Inside $C$, we have,
\begin{enumerate}
    \item A pole $z=x+1/x$ (see \cref{pic 1}). This gives $xY_0(x)$.
    \item A branch cut $[-2,+2]$. This gives $\oint_{-2}^{+2} \frac{x(s)Y_0(x(s))}{s-z}ds$. $\oint_{-2}^{2}$ refers to a contour integral around the branch cut. 
\end{enumerate}
The second part can be transformed into a contour integral in the $x$-plane through conformal mapping $z=x+1/x$.
\begin{align}
    \oint_{-2}^{2} \frac{x(s)Y_0(x(s))}{z-s}ds=-\oint \frac{w^{-1}(w(u))Y_0(w^{-1}(w(u)))}{(u+1/u)-(x+1/x)}\frac{ds}{du}du=-\oint \frac{Y_0(u)(u-1/u)}{(u+1/u)-(x+1/x)}du\label{x result}
\end{align}
The contour in $x$-plane is the circle $|u|=1$. The mapping $u=\frac{1}{2}(z-\sqrt{z^2-4})$ maps the lower part of the branch cut $[-2,2]$ to the upper half circle in the$x$-plane and the upper part of the branch cut to the lower half circle in the $x$-plane.

Notice,
\begin{align}
    \frac{(u-1/u)}{(u+1/u)-(x+1/x)}=x\Big(\frac{1}{u-x}-\frac{1}{1/u-x}\Big)\label{A B}
\end{align}
We leave the first part of \eqref{A B} unchanged and consider changing the variable $v=1/u$ in the second part.
\begin{align}
    \oint \frac{xY_0(u)}{x-1/u}du=\oint \frac{xY_0(v)}{x-v}\frac{1}{v^2}dv\label{uv change}
\end{align}
Since $|u|=|v|=1$, $v=1/u$ lies on the same contour in the opposite direction. And also notice $Y_0(u)=Y_0(1/u)$. Thus, if we write the contour integral of both parts in the same direction, \eqref{x result} reads,
\begin{align}
    -\oint \frac{Y_0(u)(u-1/u)}{(u+1/u)-(x+1/x)}du=-\oint\frac{xY_0(u)}{u-x}(1-1/u^2)du=-\oint \frac{Y_0(u)}{u-x}\frac{x}{u}(u-1/u)du
\end{align}
By applying \cref{lemma integral representation} in reverse, we have
\begin{align}
-\oint \frac{Y_0(u)}{u-x}\frac{x}{u}(u-1/u)du=-[x^>]Y_0(x)(x-1/x)
\end{align}
Combining with the $xY_0(x)$ from the residue, and \eqref{simple z} we recover \eqref{solution kernel}.

\subsection{Analytic insight of combinatorial problems}
$z=x+1/x$ is a special function for the simple lattice walk. We need to generalize this. A candidate is the conformal gluing function $x(z)=w^{-1}(z)$ defined in the RBVP \eqref{conformal}.

Let us compare the effect of the conformal mapping in combinatorics \eqref{simple mapping} and the conformal gluing function in RBVP. In RBVP, the idea of the conformal gluing function is to glue two different functions onto the same line. while in the kernel method, we are trying to separate some functions. $x$ and $1/x$ cannot be separated by substitution, but $x$ and $Y_0(x)$ can. The conformal mapping is applied to \eqref{func simple again} but not to \eqref{sep}.

We may describe the idea of the conformal mapping in combinatorics as follows. Recall \eqref{func simple} again.
\begin{align}
    xY_0-txQ(x,0)-tY_0Q(0,Y_0)=0
\end{align}
We obtain \eqref{func simple} by substitute $y=Y_0(x)$ as a formal series of $t$ into the functional equation \eqref{func model1}. From analytic insight, \eqref{func simple} is well defined within some annulus bounded by $C_1\cup C_2$ with radius $|C_1|>|C_2|$. $Y_0(x)\in \mathbb{C}(x,1/x)[[t]]$ is because there is a branch cut $[x_1x_2]$ inside the domain bounded by $C_2$. If we take $[x^>]$, the contour integral $C$ lies between $C_1,C_2$ and we cannot separate $[x^>]$ and $[x^<]$ of $Q(0,Y_0(x))$. However, if we can find a mapping from the $x$-plane to $z$-plane such that the branch cut of $Y_0(x)$ does not lie inside the contour of the integral, then $[z^>]Q(,Y_0(x(z)))$ can be calculated. We are unable to remove the cut $[x_1x_2]$, so we glue $C_1$, which is $X_0([y_1y_2])$ and make $Y_0(x(z))$ analytic on this curve. This means that we are still using the curve $X_0([y_1y_2])$ and the conformal gluing function $z=w(x)$. 

We refer to \cite{raschel2012counting} for a detailed proof of the existence and uniqueness of this curve, since $Y_0(X_0(y))=y$ is not a well-defined substitution of formal series in general.

\begin{comment}
\begin{figure}
    \centering
\begin{tikzpicture}
\tikzset{12/.style={circle, line width=1.5pt, draw=black, fill=red, inner sep=0.5pt}}
    \draw[help lines,step = 0.5] (-4.5,-2.5) grid (4.5,2.5); %辅助线格子
    \draw[-latex] (-5,0) -- (5,0);
    \draw[-latex] (0,-3) -- (0,3);%实心箭头
    \draw  (2,0) node [12]{};
    \draw  (4,0) node [12]{};
    \filldraw[red,pattern color=red, pattern=north east lines, even odd rule](0,0) circle (1)(0,0) circle (2);
    \draw[domain = -2:360][samples = 300] plot({1.5*cos(\x)}, {1.5 *sin(\x)});
    \coordinate (x) at (3,2);
    \node[below] at (x) {$x-plane$};
\end{tikzpicture}
\caption{The dashed part is the image of $X_0(y)$ on $x$-plane for $|y|<\frac{1}{\sqrt{t|S|}}$, it intersects with the domain $|x|<\frac{1}{\sqrt{t|S|}}$ on $x$-plane (the domain inside the black circle).}\label{pic 2} 
\end{figure}

%%We need to use the analytic insight of the functional equation and the analytic continuation since the conformal gluing function $w(x)$ in RBVP is defined on the contour $X_0(y_1y_2)$. 
\end{comment}

\subsection{Conformal gluing functions as combinatorial conformal mappings}
After gluing, the last step we need is to use M\"{o}bius transform to map the branch cut away from $0$ to ensure that $Y_0(x(z))$ is analytic around $0$ and everything we want is a formal series in $t$. We get the following theorem,
\begin{theorem}\label{part 2 th 0}
    Consider a quarter-plane lattice walk model. The generating functions are defined by \eqref{functional}. $Y_0(x)$($X_0(y)$) are the roots of the kernel $K(x,y)$ and are formal series in $t$. $\phi$ is the Galois automorphism of $XyK(X,y)=0$ In the $x$-plane, $X_0([y_1y_2]),X_1([y_1y_2])$ coincide and are symmetric by the real axis. There exists a conformal mapping $z=w(x)$ such that $w(s)=w(\phi(s))$ for any $s\in X_0([y_1y_2])$. $w(x)$ further has the following property,
    \begin{enumerate}
        \item $z=w(x)$ maps the domain inside the contour $X_0([y_1y_2])$ to $z$-plane except a segment $[ab]$.
        \item $Q(x(z),0)$ is analytic on $z$-plane except a segment $[ab]$.
        \item $Q(Y_0(x(z)),0)$ is analytic in some small domain around $[ab]$.
   \end{enumerate}
Or, if we consider $Q(x(z),0)$, $Q(0,Y_0(x(z)))$ as formal series of $t$, we have,
  \begin{enumerate}
        \item $z=w(x)$ maps the domain inside the contour $X_0([y_1y_2])$ to $z$-plane except a segment $[ab]$.
        \item $Q(x(z),0)$ is in $\mathbb{C}[[1/z]][[t]]$.
        \item $Q(Y_0(x(z)),0)$ is in $\mathbb{C}[[z]][[t]]$.
\end{enumerate}
\end{theorem}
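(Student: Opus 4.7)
The plan is to build on the standard existence theorem for Carleman-shift conformal gluing functions (Litvinchuk 2000, applied as in Raschel 2012), and then verify the three listed properties by stitching together that analytic input with the analytic-continuation argument already set up in the preceding section.

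First I would recall that for each of the 79 non-trivial models the contour $X_0([y_1, y_2])$ is a simple closed Jordan curve on the $x$-plane, symmetric about the real axis, enclosing exactly the small branch points $x_1, x_2$ of $\Delta(x, t)$, and pointwise stable under the Galois involution $\phi$ (since $X_0([y_1, y_2]) = X_1([y_1, y_2])$). The general Carleman theory then produces a function $w$, meromorphic in the interior of this contour with a single simple pole $z_0$, continuous up to the boundary, satisfying $w(s) = w(\phi(s))$ for $s \in X_0([y_1, y_2])$, and realising a bijective conformal map of the interior onto $\mathbb{C} \setminus [ab]$. This is exactly property (1), and fixes the normalisation by choosing where $w$ sends the pole $z_0$.

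For property (2), the key is the analytic-continuation step from \cref{sec combinatorial for RBVP}: substituting $(X_0(y), y)$ into the functional equation expresses $Q(X_0(y), 0)$ as a rational combination of $Q(0, y)$ and known algebraic functions, valid for $y$ in the original disc $\{|y| < 1/\sqrt{t|S|}\}$. Since by \cite{raschel2012counting} the image $X_0(\{|y| \leq 1\})$ is contained in the disc $\{|x| \leq 1\}$, while $Q(x, 0)$ is already holomorphic in the original disc of $x$, the two definitions agree on their overlap and patch together into a single holomorphic function on the entire interior of $X_0([y_1, y_2])$. Composing with $w^{-1}$ then yields a holomorphic extension of $Q(x(z), 0)$ to all of $\mathbb{C} \setminus [ab]$.

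For property (3), I would trace where $Y_0(x(z))$ lands when $z$ approaches $[ab]$. Points of $[ab]$ correspond via $w^{-1}$ to pairs $(s, \phi(s)) \in X_0([y_1, y_2])$ that were identified by the gluing, and for any such $s = X_0(y_0)$ with $y_0 \in [y_1, y_2]$, the two values $Y_0(s), Y_1(s)$ reduce to $y_0$ and its $\psi$-image, both sitting in a compact subset of $\{|y| < 1/\sqrt{t|S|}\}$. By continuity this inclusion persists in some $z$-neighbourhood of $[ab]$, and since $Q(0, \cdot)$ is holomorphic in the original disc of $y$, $Q(0, Y_0(x(z)))$ is holomorphic in that neighbourhood. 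For the formal-series version I would translate "holomorphic at $z = \infty$" into an expansion in $\mathbb{C}[[1/z]][[t]]$ (by shrinking $t$ so that the exterior of $[ab]$ engulfs any prescribed $1/z$-disc around $\infty$) and "holomorphic on a neighbourhood of $[ab]$" into an expansion in $\mathbb{C}[[z]][[t]]$ centred at an endpoint of $[ab]$ that can be normalised to $0$ by absorbing an affine shift into $w$. The main obstacle will be this last normalisation: one has to choose $K_1, K_2$ in the explicit form of $w$ (and the branch of $w^{-1}$) so that the pole $z_0$ is mapped to $\infty$ and one endpoint of $[ab]$ coincides with $0$, and one must check that these two expansion regions together cover $\mathbb{C}$, so that the supports $[z^{\geq}]$ and $[z^{<}]$ are genuinely disjoint rather than separated by an intermediate annulus where neither expansion converges.
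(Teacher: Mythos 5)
Your proposal follows essentially the same route as the paper's proof, and most of it is sound, but it leaves two points that the paper handles explicitly and that your argument needs.

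\paragraph{Single-valuedness of $Y_0(x(z))$ across $[ab]$.} For property (3) you argue that the boundary values of $Y_0(x(z))$ along $[ab]$ land in a compact subset of $\{|y|<1/\sqrt{t|S|}\}$, and then invoke continuity. But to conclude that $Q(0,Y_0(x(z)))$ is \emph{holomorphic} in a full two-sided neighbourhood of $[ab]$, you must first show that $Y_0(x(z))$ itself has no jump across $[ab]$, i.e.\ that $Y_0(x(s^+))=Y_0(x(s^-))$ for $s\in[ab]$. This is not automatic: $x(z)$ itself does jump across $[ab]$ (from a boundary point $s$ of $X_0([y_1y_2])$ to its identified partner $\phi(s)$). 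The paper closes this by using the conjugation symmetry of the contour together with $Y_0(X_0([y_1y_2]))=[y_1y_2]$ (Fayolle et al., Cor.~5.3.5): if $x_l$ is on the contour then so is $\bar{x_l}$, and $Y_0(x_l)=Y_0(\bar{x_l})=y_l$, hence the two-sided limits of $Y_0\circ w^{-1}$ agree. Your phrasing about ``$Y_0(s),Y_1(s)$ reduce to $y_0$ and its $\psi$-image'' describes the two branches of $y$ over a \emph{single} $s$, which is a different statement, and $\psi(y_0)=Y_1(X_0(y_0))$ in fact generically lies \emph{outside} the convergence disc, so that remark is a red herring.

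\paragraph{The formal-series normalisation.} You flag the normalisation as ``the main obstacle'' and leave it. The paper resolves it: write $w(x)=\tfrac{1}{x-x_0}+f(x)$ with $f$ analytic inside $X_0([y_1y_2])$, apply a M\"obius transform $w'=\frac{aw+b}{cw+d}$ chosen so that $w'(0)=\infty$ (not so that the pole $z_0$ maps to $\infty$ --- it already does; what is needed is $w'^{-1}(\infty)=0$, i.e.\ relocating the pole of $w$ to $x=0$), which forces $x(z)\in\mathbb{C}[[1/z]][[t]]$. For $Y_0(x(z))$, the paper then translates so that $w'(x_s)=0$ for a point $x_s$ \emph{off} the cut $[x_1,x_2]$ with $|x_s|$ a formal series of $t$, and separately verifies that the image $[z_1,z_2]$ of $[x_1,x_2]$ and the glued segment $[ab]$ can be put on opposite sides of a single annulus (possible because their $x$-plane pre-images do not intersect), which is exactly the ``intermediate-annulus'' worry you raise. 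Centering the $\mathbb{C}[[z]][[t]]$ expansion at an endpoint of $[ab]$, as you suggest, is delicate: $x(z)$ has a square-root singularity there (since $w$ behaves like a square root near an endpoint of the slit), and while one can check that $Y_0'$ vanishes at the corresponding $x$-value so the singularity of $Y_0(x(z))$ cancels, this is an extra verification that the paper sidesteps by choosing $x_s$ generically. So the skeleton of your argument matches the paper, but these two steps need to be carried out, not merely flagged.
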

\begin{proof}
    The existence and the gluing property of $w(x)$ was proved in \cite{raschel2012counting} and the authors also gave explicit expression of $w(x)$. Here we prove the second and third property which is related to the positive degree term extraction.
    
    Since the mapping $z=w(x)$ is conformal, the preimage of the whole $z$-plane except $[ab]$ is the domain inside $X_0([y_1y_2])$ and $Q(x,0)$ is convergent in this domain, $Q(x(z),0)$ converges in $z$-plane except the branch cut. 

    $Q(0,Y_0(x))$ is analytic on $[ab]$ due to the gluing property. Since $w(s)=w(\phi(s))$, $u^-=w(s)$ and $u^+=w(\phi(s))$ for $s\in X_0([y_1y_2])$. Then $Q(0,Y_0(x(u^-)))=Q(0,Y_0(x(u^+)))$ and it is analytic on $[ab]$. Since $Q(0,Y_0(x))$ only has $[x_1x_2]$ as branch cut and it is away from $X_0([y_1y_1])$, $Q(0,Y_0(x(z)))$ is analytic in some domain around $[ab]$.

The combinatorial insight of $w(x)$ can be treated as a M\"{o}bius transform of $w(x)$ in analytic insight. By \textbf{theorem 5.27} in \cite{fayolle2017random}, the conformal gluing function $w(x)$ in analytic insight can be written as,
    \begin{align}
        w(x)=\frac{1}{x-x_0}+f(x)
    \end{align}

where $f(x)$ is analytic inside $X_0([y_1y_2])$. $w(x)$ has a singularity at $x_0$, which maps to $\infty$ in $z$-plane. We consider a M\"obius transform,
\begin{align}
    w'(x)=\frac{aw(x)+b}{cw(x)+d}=\frac{\frac{a}{x-x_0}+af(x)+b}{\frac{c}{x-x_0}+cf(x)+d}=\frac{a+axf(x)+bx-af(x)x_0-bx_0}{c+cxf(x)+dx-cf(x)x_0-dx_0}
\end{align}
And choose $c-cf(0)x_0-dx_0=0$, $a-af(0)x_0-bx_0\neq 0$. Then $w'(x)$ still has the property that $w'(x)$ glues the contour $X_0(y_1y_2)$ to a segment on $z$-plane. $x=0$ is mapped to $z=w'(x)=\infty$. Now we expand $x(z)=w'^{-1}(z)$ as a formal series of $t$ in some annulus where $[ab]$ is in the bounded domain of the complement of the annulus. If
\begin{align}
    x(z)=a_0(t)+g(1/z,t)
\end{align}
where $g(1/z,t)\in \mathbb{C}[[1/z]][[t]]$. It is a formal series of $1/z$ since $w'^{-1}(z)$ is analytic at $\infty$. We further claim $a_0(t)=0$ since $w'^{-1}(\infty)=0$. 

Now let us consider $Y_0(x)$. On $x$-plane inside $X_0(y_1y_2)$, $Y_0(x)$ is singular on $[x_1x_2]$. $[x_1x_2]$ is mapped to $[z_1z_2]$ on $z$-plane. We now adjust the radius of the analytic annulus (see \cref{pic 121}) such that $[z_1z_2]$ is in the unbounded domain of the complement of the annulus. This can always be done since the pre-image of $[ab]$ and $[z_1z_2]$ do not intersect. Further we choose $x_s$ such that,
\begin{enumerate}
    \item $x_s$ is not on the segment $[x_1x_2]$.
    \item $|x_s|$ is some fixed value for $t$ small enough (For example, we may choose some point not on the real line).
\end{enumerate}
And apply translation $z\to z+z_0$ such that $0=w(x_s)$ on $z$-plane. Then in the same annulus, $Y_0(x(z))$ is expanded as a formal series of $t$,
\begin{align}
    Y_0(x(z))=Y_0(x_s)+z h(z,t).
\end{align}
$h(z)$ only contains $[z^\geq]$ terms since $Y_0(x(z))$ is analytic around $[ab]$. $Y_0(x_s)\in t\mathbb{C}[[t]]$ since $Y_0(x)\in t\mathbb{C}(x)[[t]]$. Thus, $Y_0(x(z))\in t\mathbb{C}[[t]]+z\mathbb{C}[[z]][[t]]$. The composition law of the iterated Laurent series \cite{xin2004ring} shows $Q(Y_0(x(z)),0)\in\mathbb{C}[[z]][[t]]$.
\begin{figure}
    \centering
\begin{tikzpicture}
\tikzset{12/.style={circle, line width=1.5pt, draw=black, fill=red, inner sep=0.5pt}}
    \draw[help lines,step = 0.5] (-4.5,-2.5) grid (4.5,2.5); %辅助线格子
    \draw[-latex] (-5,0) -- (5,0);
    \draw[-latex] (0,-3) -- (0,3);%实心箭头
    \draw[red](-1,0)--(1,0);
    \coordinate (a0) at (-0.5,-0.5);
    \coordinate (b0) at (0.5,0);
    \node[below] at (a0) {$a$};
    \node[below] at (b0) {$b$};
    \draw  (-0.5,-0.5) node [12]{};
    \draw  (0.5,0) node [12]{};
    \draw[red](-0.5,-0.5)--(0.5,0);
    \coordinate (a1) at (3,0);
    \coordinate (b1) at (4,1);
    \node[below] at (a1) {$z_1$};
    \node[below] at (b1) {$z_2$};
    \draw  (3,0) node [12]{};
    \draw  (4,1) node [12]{};
    \draw[red](3,0)--(4,1);
    \filldraw[red,pattern color=red, pattern=north east lines, even odd rule](0,0) circle (1)(0,0) circle (2);
    \coordinate (z) at (3,2);
    \node[below] at (z) {$z-plane$};
    \node[above] at (1.5,0.5) {$z$};
\end{tikzpicture}
\caption{In side the red annulus $Q(x(z),0)\in \mathbb{C}[[1/z]][[t]]$ and $Q(Y_0(x(z)),0)\in \mathbb{C}[[1/z]][[t]]$.}\label{pic 121} 
\end{figure}
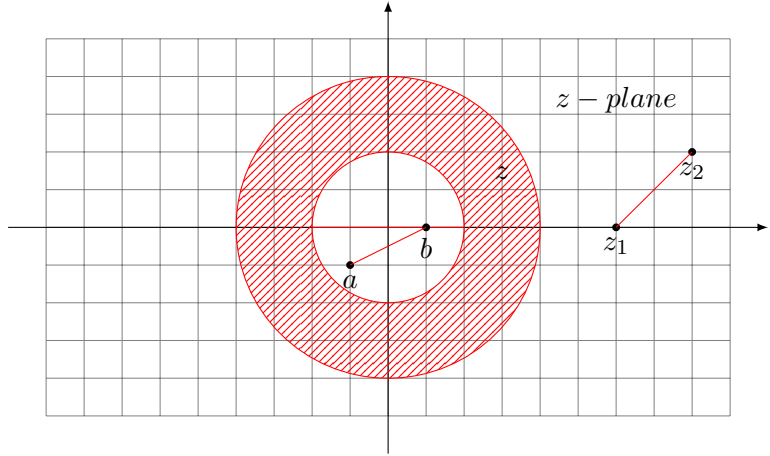
\end{proof}
\cref{part 2 th 1} and \cref{part 2 th 2} are direct corollaries of \cref{part 2 th 0}. 

\section{From Conformal Mapping to Tutte's Invariant}\label{tutte}
The algebraic property of the generating functions is widely interested in combinatorics. There are different techniques to find algebraic models through computer algebra, the kernel method and RBVP.etc.\cite{bostan2017human,bousquet2016elementary,bostan20163}. Here we show that the combinatorial analog of RBVP coincides with Tutte's invariant approach in detecting algebraic properties and the conformal mapping is the weak invariant defined in \cite{raschel2020counting}.

Recall we are solving a equation
\begin{align}
    x(z)Y_0(x(z))-tx(z)Q(x(z),0)-tY_0(x(z))Q(0,Y_0(x(z)))=0
\end{align}
after conformal mapping. We apply $[z^>]x(z)Y_0(x(z))$ to obtain the solution. From an analytic point of view, this integral representation is not algebraic since it is an elliptic integral. If $x(z)Y_0(x(z))\sim F(x(z))+G(Y_0((z))$, then the integral gives algebraic functions in $z$ (for $F(x(z))$, we can consider the integral around infinity). This is exactly the decoupling criteria of the Tutte's invariant method \cite{raschel2020counting}.

Let us take Kreweras walk as an example. The allowed steps are $\{\nearrow,\leftarrow,\downarrow\}$. The kernel reads $K(x,y)=1-t(1/x+1/y+xy)$. This shows,
\begin{align}
    x(z)Y_0(x(z))=\frac{1}{t}-\frac{1}{x(z)}-\frac{1}{Y_0(x(z))}.
\end{align}
Consider the analytic interpretation of $[z^>]$ and $[z^<]$. The integral of $1/Y_0(x(z))$ does not have branch-cuts inside the contour and the integral of $1/x(z)$ can be calculated by the residues outside the contour. Thus, $[z^>]x(z)Y_0(x(z))$ is algebraic. If $z=w(x)$ is also an algebraic function, $Q(0,y)$ is algebraic.

A more difficult example is Gessel's walk. The allowed steps are $\{ \nearrow,\leftarrow,\rightarrow,\swarrow\}$. The kernel reads $K(x,y)=1-t(x+1/x+xy+1/(xy))$. We are still considering $x(z)Y_0(x(z))$. Fortunately, in \cite{raschel2020counting}, Gessel's walk is decoupled as,
\begin{align}
    xY_0(x)=-\frac{1}{t(1+Y_0(x))}+\frac{1}{t}-\frac{1}{x}.
\end{align}
The generating functions $Q(x,0),Q(0,y)$ and $Q(x,y)$ of Gessel's walk are algebraic.

The criteria may also be combined with the discussion of the coefficients and applied to models with interactions. Consider reverse Kreweras walk with interactions, the functional equation reads,
\begin{align}
\begin{split}
\left(1-t\left(x+y+\frac{1}{xy}\right)\right)Q(x,y)=&\frac{1}{c} + \frac1a\Big(a-1 - \frac{ta}{xy}\Big)Q(x,0) + \frac1b\Big(b - 1 -\frac{tb}{xy}\Big)Q(0,y) \\
&+\left(\frac{1}{abc}(ac+bc-ab-abc)+\frac{t}{xy}\right)Q(0,0)\label{functional reKr}.
\end{split}
\end{align}
The coefficient of $Q(x,0)$ is $\frac1a\Big(a-1 - \frac{ta}{xy}\Big)$. If we regard it as $\frac1a\Big(a-1 - taY_1\Big)$, its conjugate is $\frac1a(a-1-taY_0)$. Similar considerations apply to $\frac1b\Big(b - 1 -\frac{tb}{xy}\Big)$ and the conjugate is $\frac1b\Big(b - 1 -tbX_0\Big)$. Inspired by this, it is straight forward to multiply $(b-1-tbx)(a-1-tay)$ to the functional equation \eqref{functional reKr}. The coefficient of $Q(x,0)$ becomes,
\begin{align}
\begin{split}
    \frac1a\Big(a-1 - \frac{ta}{xy}\Big)(b-1-tbx)(a-1-tay)&=(b-1-tbx)\Big((a-1)^2-ta(a-1)\Big(y+\frac{1}{xy}\Big)+\frac{t^2a^2}{x}\Big)\\
    &\equiv(b-1-tbx)\Big((a-1)^2-ta(a-1)\Big(\frac{1}{t}-x\Big)+\frac{t^2a^2}{x}\Big).\label{reverse Kreweras}
\end{split}
\end{align}
In the second line we apply the equivalent relation $1-t(x+y+\frac{1}{xy})\equiv 0$. The symmetry of $x,y$ shows the coefficient of $Q(0,y)$ is equivalent to $(a-1-tay)\Big((b-1)^2-tb(b-1)\Big(\frac{1}{t}-y\Big)+\frac{t^2b^2}{y}\Big)$. Therefore, after substituting $y\to Y_0$, the coefficient of $Q(x,0)$ is rational of $x$ and the coefficient of $Q(0,Y_0)$ is rational of $Y_0$. We can separate the $[z^>]$ and $[z^<]$ terms (after mapping $z=w(x)$). After multiplying  by $(b-1-tbx)(a-1-tay)$, the term irrelevant to $Q(x,0)$ and $Q(0,y)$ on the right hand-side of \eqref{functional reKr} is a linear function of $x,y,xy,\frac{1}{xy}$. For reverse Kreweras walk, $xy$ and $\frac{1}{xy}$ are both decoupled. Thus, we conclude that the reverse Kreweras walk with interactions has algebraic generating functions. This is exactly the result of \cite{beaton2019quarter}.

Similarly, using the decoupling criterion \cite{raschel2020counting}, one can find the D-algebraic property of the special $9$ models associated with infinite groups stated in \cite{raschel2020counting} via conformal mapping. The obstinate kernel method with combinatorial conformal mapping can be treated as a general form of Tutte's invariant approach, and our approach proves not only the algebraicity, but also the D-finiteness. 

An innocent extension of our approach is that we find what causes the functions to be transcendental. Those models which are not D-algebraic, are transcendental since taking $[z^>]$ terms, or taking contour integral around a branch cut is not a closed operation for D-algebraic functions.

\section{Conclusion and Prospect}
Let us generalize what we have done in this paper. We want to solve an equation,
\begin{align}
   A(x,t)Q(x,t)+B(x,t)F(f(x,t),t)=g(x,t)
\end{align}
where $Q(x,t),F(f(x,t),t)$ are two unknown functions analytic in some domain. From a combinatorial insight, we want to find a conformal mapping $z=w(x)$ such that,
\begin{enumerate}\label{en}
    \item $w^{-1}(z)$ is a Laurent series in $1/z\mathbb{C}[[1/z]][[t]]$;
    \item $f(w^{-1}(z))$ is a Laurent series in $\mathbb{C}[[z]][[t]]$;
\end{enumerate}
or from the analytic insight,
\begin{enumerate}
    \item $w(x)$ maps some subset of analytic domain of $Q(x)$ in $x$-plane to $z$-plane where $Q(w^{-1}(z))$ is convergent except some segment;
    \item $F(f(w^{-1}(z)))$ is analytic near the segment.
\end{enumerate}
Then we can solve $Q(w^{-1}(z)),F(f(w^{-1}(z)))$ by canonical factorization of $A(w^{-1}(z)),B(w^{-1}(z))$ and separating $[z^<],[z^>]$ part. The properties of these functions can be analyzed through the decoupling property of $P_c$, $[z^>],[z^<]$ operators, conformal gluing functions $z=w(x)$ and the properties of $A(w^{-1}(z)),B(w^{-1}(z))$.

The main contribution of this work is that we proved that each step of this approach is valid and can be interpreted both combinatorially and analytically. From the analytic insight of conformal gluing function and index, we see the contact between the kernel method, RBVP and Tutte's invariants method.

Some deep understanding of this conformal mapping is our future research interests. The first condition for $z=w(x)$ is  that it is bi-holomorphic and the second condition demonstrates the gluing property. In \cref{part 2 th 0}, we prove that any conformal gluing function in RBVP can be transformed into a combinatorial conformal mapping by M\"{o}bius transform, and this gives a combinatorial insight for conformal gluing functions. However, besides the simple Joukowsky transform, we are still using the conformal gluing function in RBVP. Can we directly find it through the property: bi-holomorphic and the gluing property. This question is important since in higher dimension or large step walks, we do not have powerful tools like Weierstrass $\wp$ function. Actually, what we need is a differomorphism and conformal is not necessary.
\section*{Acknowledgement}
Ruijie Xu is supported by Yanqi Lake Beijing Institute of Mathematical Sciences and Applications(BIMSA).
\bibliography{reportbib}
\end{document}